\theoremstyle{plain}
\newtheorem{theorem}{Theorem}[section]
\newtheorem{proposition}[theorem]{Proposition}
\newtheorem{lemma}[theorem]{Lemma}
\theoremstyle{remark}
\newtheorem{remark}[theorem]{Remark}
\numberwithin{equation}{section}
\newcommand{\C}{\mathbb{C}}
\newcommand{\R}{\mathbb{R}}
\def\({\left(}
\def\){\right)}
\def\<{\left\langle}
\def\>{\right\rangle}
\def\le{\leqslant}
\def\ge{\geqslant}
\newcommand{\rre}{\mathbb{R}}
\newcommand{\pt}{\partial}
\begin{document}
\title[On NLS with higher order anisotropic dispersion]
{Long range scattering for the nonlinear\\
Schr\"{o}dinger equation with higher order\\
anisotropic dispersion in two dimensions}

\author[J.-C. Saut]{Jean-Claude Saut}

\address{Laboratoire de Math\' ematiques, CNRS and Universit\' e Paris-Sud\\
91405 Orsay, France}
\email{jean-claude.saut@u-psud.fr}

\author[J.Segata]
{Jun-ichi Segata}

\address{Mathematical Institute, Tohoku University\\
6-3, Aoba, Aramaki, Aoba-ku, Sendai 980-8578, Japan}
\email{segata@m.tohoku.ac.jp}

\subjclass[2000]{Primary 35Q55
; Secondary 35B40}

\keywords{Schr\"{o}dinger equation with higher order dispersion,  
scattering problem}


\begin{abstract}
This paper is a continuation of our previous study \cite{SS} on 
the long time behavior of solution to the nonlinear Schr\"{o}dinger 
equation with higher order anisotropic dispersion (4NLS). We prove 
the long range scattering for (4NLS) with the quadratic nonlinearity 
in two dimensions. More precisely, for a given asymptotic profile 
$u_{+}$, we construct a solution to (4NLS) which converges to 
$u_{+}$ as $t\to\infty$, where $u_{+}$ is given 
by the leading term of the solution to the linearized equation of 
(4NLS) with a logarithmic phase correction.

\end{abstract}

\maketitle

\section{Introduction}

This paper is a continuation of our previous study \cite{SS} on 
the long time behavior of solution to the nonlinear Schr\"{o}dinger 
equation with higher order anisotropic dispersion:
\begin{eqnarray}
i\pt_tu+\frac12\Delta u-\frac{1}{4}\pt_{x_{1}}^4u=\lambda|u|^{p-1}u,
\qquad t>0,x\in\rre^{d},
\label{4NLS}
\end{eqnarray}
where $u:\rre\times\rre^{d}\to\C$ is an unknown function and $p>1$. 
Equation (\ref{4NLS}) arises in nonlinear optics to model the propagation
of ultrashort laser  pulses in a 
medium  with  anomalous  time-dispersion  in  the  presence  of
fourth-order time-dispersion   (see \cite {Ber,FIS,WF} 
and the references therein). It also arises in models of propagation 
in fiber arrays (see \cite{ADRT, FIS1}). 
The readers can consult \cite{B} for the well-posedness of (\ref{4NLS}), 
and existence/non-existence and qualitative properties results of solitary wave 
solutions for (\ref{4NLS}). 

In this paper, we consider the scattering problem for (\ref{4NLS}). 
Since the solution to the linearized equation of (\ref{4NLS}) decays like  
$O(t^{-d/2})$ in $L^{\infty}(\R^{d})$ as $t\to\infty$ (see Ben-Artzi, Koch and Saut 
\cite{BKS}), we expect that if $p>1+2/d$, then the (small) solution 
to (\ref{4NLS}) will scatter to the solution to the linearized equation 
and if $p\le1+2/d$, then the solution to (\ref{4NLS}) will not scatter. 
The homogeneous fourth order nonlinear Schr\"odinger type equation 
\begin{eqnarray}
i\pt_tu+\frac12\Delta^{2} u=\lambda|u|^{p-1}u,
\qquad t>0,x\in\rre^{d}
\label{h}
\end{eqnarray}
has been studied by many authors from the point of view of the scattering.  
See \cite{SS} for a review of the known results on the scattering and blow-up problem 
for (\ref{h}). Compared to the 
homogeneous equation (\ref{h}), there are few results on the long time behavior 
of solution for (\ref{4NLS}). 
For the one dimensional cubic case, the second author \cite{S} proved that 
for a given asymptotic profile, there exists a solution $u$ to (\ref{4NLS}) 
which converges to the given asymptotic profile as $t\to\infty$, 
where the asymptotic profile is given by the leading term of the solution 
to the linearized equation with a logarithmic phase correction. 
Furthermore, Hayashi and Naumkin \cite{HN9} proved that for any small 
initial data, there exists a global solution to (\ref{4NLS}) with $d=1,p=3$ 
which behaves like a solution to the linearized equation 
with a logarithmic phase correction. Recently, 
the authors \cite{SS} have shown the unique existence of solution $u$ to (\ref{4NLS}) 
which scatters to the free solution for $2<p<3$ if $d=2$ and $9/5<p<7/3$ if $d=3$. 
In this paper, refining the asymptotic formula \cite[Proposition 2.1]{SS} 
as $t\to\infty$ for the solution to the linearized equation of (\ref{4NLS}), we prove 
the long range scattering for (\ref{4NLS}) with the quadratic nonlinearity 
in two dimensions. 

Let us consider the final state problem:
\begin{eqnarray}
\left\{
\begin{array}{l}
\displaystyle{
i\pt_tu+\frac12\Delta u-\frac{1}{4}\pt_{x_{1}}^4u=\lambda|u|u,
\qquad t>0,x\in\rre^{2},}\\
\displaystyle{\lim_{t\to+\infty}(u(t)-u_{+}(t))=0,\qquad in\ L^2(\rre^{2}),}
\end{array}
\right.
\label{FSP}
\end{eqnarray}
where $u:\rre\times\rre^{2}\to\C$ is an unknown function and 
$u_{+}:\rre\times\rre^{2}\to\C$ is a ``modified'' asymptotic profile 
given by
\begin{equation}\label{u}
\begin{aligned}
\quad u_+(t,x)&=
\frac{t^{-1}}{\sqrt{3\mu_{1}^{2}+1}}\hat{\psi}_{+}(\mu)
e^{\frac34it\mu_{1}^{4}+\frac12it|\mu|^{2}
+iS_{+}(t,\mu)-i\frac{\pi}{2}},\\
S_+(t,\xi)&=-\lambda
\frac{|\hat{\psi}_{+}(\xi)|}{\sqrt{3\xi_{1}^{2}+1}}\log t,
\end{aligned}
\end{equation}
where $\mu=(\mu_{1},\mu_{2})\in\rre\times\rre$ is a stationary point 
for the oscillatory integral (\ref{os}) associated with the linearized equation 
of (\ref{FSP}), i.e.,
\begin{equation}\label{mu}
\begin{aligned}
\mu_{1}&=
\frac{1}{2^{\frac13}}\left\{
\left(\frac{x_{1}}{t}+\sqrt{
\left(\frac{x_{1}}{t}
\right)^{2}+\frac{4}{27}}\right)
\right\}^{1/3}
+
\left\{\left(\frac{x_{1}}{t}-\sqrt{
\left(\frac{x_{1}}{t}\right)^{2}+\frac{4}{27}}\right)
\right\}^{1/3},\\
\mu_{2}&=\frac{x_{2}}{t}.
\end{aligned}
\end{equation}

Our main result in this paper is as follows:

\begin{theorem}[Long range scattering]\label{nonlinear1}
There exists $\varepsilon>0$ with the following properties: 
for any $\psi_+\in H^{0,2}(\R^2)$ with $\|\psi_+\|_{H^{0,2}}<\varepsilon$ 
(see (\ref{ws}) for the definition of $H^{0,2}$), 
there exists a unique global solution $u\in C(\rre;L_x^2(\rre^{2}))
\cap \langle\pt_{x_{1}}\rangle^{-1/4}L_{loc}^{4}(\rre;L_x^{4}(\rre^{2}))$ 
to (\ref{FSP}) satisfying 
\begin{eqnarray*}
\|u(t)-u_{+}(t)\|_{L_x^2}
\le Ct^{-\alpha}
\end{eqnarray*}
for $t\ge3$, where $1/2<\alpha<3/4$ and 
$u_{+}$ is given by (\ref{u}). 
\end{theorem}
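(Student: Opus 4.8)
The plan is to solve the final state problem by a fixed-point argument centered on the difference $v = u - u_+$ rather than on $u$ itself. First I would derive the precise asymptotic expansion of the linear propagator, refining \cite[Proposition 2.1]{SS}: applying stationary phase to the oscillatory integral (\ref{os}), the stationary point being the $\mu$ of (\ref{mu}), yields the leading profile that appears in (\ref{u}), with the Hessian of the phase producing the factor $(3\mu_1^2+1)^{-1/2}$ and the $-i\pi/2$ coming from the signature. The key point is that the modified profile $u_+$ must be chosen so that it \emph{nearly} solves (\ref{4NLS}) with the quadratic nonlinearity: inserting $u_+$ into the equation, the logarithmic phase $S_+$ is precisely what is needed so that the self-interaction term $\lambda|u_+|u_+$ is cancelled to leading order by the time-derivative of the phase, leaving an error that is integrable in time.

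\medskip

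Next I would set up the integral (Duhamel) formulation for $v = u - u_+$. Writing $\mathcal U(t)$ for the linear propagator of (\ref{4NLS}), the equation for $v$ reads
\begin{equation*}
v(t) = -i\lambda\int_t^\infty \mathcal U(t-s)\bigl(|u|u - |u_+|u_+\bigr)(s)\,ds
\; -\; i\int_t^\infty \mathcal U(t-s)\,\mathcal R(s)\,ds,
\end{equation*}
where $\mathcal R = i\pt_s u_+ + \frac12\Delta u_+ - \frac14\pt_{x_1}^4 u_+ - \lambda|u_+|u_+$ is the residual from plugging the approximate profile into the equation. The scheme is to run a contraction in the space $C([T,\infty);L^2_x)\cap \langle\pt_{x_1}\rangle^{-1/4}L^4_t L^4_x$ suggested by the theorem, using the Strichartz estimates for the anisotropic fourth-order propagator established in \cite{SS} (the admissible pair being the one associated with the $O(t^{-d/2})$ decay, here $d=2$). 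The nonlinear difference splits via $\bigl||u|u-|u_+|u_+\bigr|\lesssim (|u|+|u_+|)|v|$, which is handled by Hölder and the Strichartz bounds, giving a contraction on a small ball once $T$ is large and $\|\psi_+\|_{H^{0,2}}<\varepsilon$; the smallness of $\varepsilon$ feeds directly into the Lipschitz constant so that the map is a strict contraction.

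\medskip

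The decay rate $t^{-\alpha}$ with $1/2<\alpha<3/4$ comes from estimating the two source integrals. For the residual integral I would show $\|\mathcal R(s)\|_{L^2_x}\lesssim s^{-1-\delta}$ for a suitable $\delta>0$; this is the payoff of the phase correction, since the leading nonresonant piece of $|u_+|u_+$ is cancelled and what survives are genuinely lower-order terms (the $t$-derivative of the amplitude, the curvature corrections to the stationary-phase expansion, and the remainder of the oscillatory integral). Integrating $s^{-1-\delta}$ from $t$ to $\infty$ gives the $t^{-\alpha}$ bound. I would then verify closure: the $L^4_tL^4_x$ norm of $v$ is controlled by the same Strichartz machinery, and uniqueness follows from the contraction. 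The main obstacle, I expect, is the residual estimate: quadratic nonlinearities are long-range and notoriously delicate, so one must track the phase with enough precision that the cancellation in $\mathcal R$ is exactly of integrable order. Here the weight regularity $\psi_+\in H^{0,2}$ (two $x$-derivatives of Fourier weight, via (\ref{ws})) is what furnishes the decay of the profile needed to push $\delta$ above the integrability threshold, and getting the anisotropic stationary-phase remainder to respect this weight — given that the $x_1$ and $x_2$ directions scale differently through $\mu_1$ versus $\mu_2$ — is the technically demanding step.
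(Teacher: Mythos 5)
There is a genuine gap, and it sits exactly at the step you yourself flag as the crux: the residual estimate $\|\mathcal R(s)\|_{L^2_x}\lesssim s^{-1-\delta}$ for $\mathcal R=i\pt_t u_+ +\frac12\Delta u_+-\frac14\pt_{x_1}^4 u_+-\lambda|u_+|u_+$. Because you run the fixed point on $v=u-u_+$, you must apply the full operator, including $\pt_{x_1}^4$, to the composite profile $t^{-1}(3\mu_1^2+1)^{-1/2}\hat\psi_+(\mu)e^{i(\frac34 t\mu_1^4+\frac12 t|\mu|^2+S_+(t,\mu))}$. This requires four $\xi$-derivatives of $\hat\psi_+$ and of $e^{iS_+}$ (hence of $|\hat\psi_+|$, which is merely Lipschitz in $\hat\psi_+$ and need not admit even two classical derivatives at zeros), whereas the hypothesis $\psi_+\in H^{0,2}$ gives $\hat\psi_+\in H^2_\xi$ only; under the stated assumptions $\mathcal R$ is not even a well-defined $L^2$ function, let alone $O(s^{-1-\delta})$. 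Moreover, the cancellation you invoke is not automatic: since $\mu=\mu(t,x)$, the chain rule produces transport terms such as $\nabla_\xi S_+\cdot\pt_t\mu\,u_+$, of size $t^{-1}\log t$ in $L^2$ (not integrable); these vanish only because the convective derivative of $\mu$ along the group-velocity rays is zero, a structural computation your proposal asserts rather than performs. A regularization/density argument to recover $H^{0,2}$ data is not innocent either, since the contraction constants and the final-state map must survive the limit.

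The paper circumvents precisely this by never differentiating the physical-space profile: it replaces $u_+$ by $W(t)\mathcal F^{-1}w$ with $w(t,\xi)=\hat\psi_+(\xi)e^{iS_+(t,\xi)}$, for which $\mathcal L\bigl(W(t)\mathcal F^{-1}w\bigr)=W(t)\mathcal F^{-1}(i\pt_t w)$ holds exactly, and $i\pt_t w=\lambda t^{-1}(3\xi_1^2+1)^{-1/2}|\hat\psi_+|\hat\psi_+e^{iS_+}$ is computed with $\xi$ an independent variable, so no transport terms and no fourth derivatives ever appear. The stationary-phase expansion (Proposition \ref{linear}) is then used only as a fixed-time $L^2$ comparison, both to convert this source into $|W(t)\mathcal F^{-1}w|W(t)\mathcal F^{-1}w$ up to $O(t^{-1-\beta}(\log t)^4)$ errors and, at the very end, to pass from $W(t)\mathcal F^{-1}w$ to $u_+$ at cost $t^{-\beta}(\log t)^2$; the requisite bound $\|\hat\psi_+e^{iS_+}\|_{H^2_\xi}\lesssim(\log t)^2$ (Lemma \ref{nl1}) uses exactly two $\xi$-derivatives, matching $H^{0,2}$, with $|w|w$-type nonsmoothness harmless at this order. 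Your stationary-phase derivation of the profile, the function space, and the Strichartz-plus-H\"older contraction scheme all agree with the paper; but the decomposition through $w$ is what makes the residual estimable at the stated regularity, and without it your central estimate fails. Separately, you construct $u$ only on $[T,\infty)$ and say nothing about extending to $u\in C(\rre;L^2_x)$, which the paper obtains from $L^2$ conservation and the Tsutsumi-type subcritical well-posedness argument \cite{Tsutsumi}; and contraction gives uniqueness only in the ball, so the uniqueness assertion also needs that well-posedness input.
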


We give an outline of the proof of Theorem \ref{nonlinear1}. 
To prove Theorem \ref{nonlinear1}, we employ the argument 
by Ozawa \cite{O}, Hayashi and Naumkin \cite{HN3,HN1}. 
We first construct a solution $u$ to the final state problem 
\begin{eqnarray}
\left\{
\begin{array}{l}
\displaystyle{
i\pt_tu+\frac12\Delta u-\frac{1}{4}\pt_{x_{1}}^4u=\lambda|u|u,
\qquad t>0,x\in\rre^{2},}\\
\displaystyle{\lim_{t\to+\infty}(u(t)-W(t){{\mathcal F}}^{-1}w)=0,\qquad in\ L^2(\R^{2}).}
\end{array}
\right.
\label{FSP2}
\end{eqnarray}
where $w(t,\xi)=\hat{\psi}_+(\xi)e^{iS_+(t,\xi)}$ and $\{W(t)\}_{t\in\rre}$ 
is a unitary group generated by the operator $(1/2)i\Delta-(1/4)i\pt_{x_{1}}^4$. 
To prove this, 
we first rewrite (\ref{FSP2}) as the integral equation 
\begin{eqnarray}
\lefteqn{u(t)-
W(t){{{\mathcal F}}}^{-1}w}\nonumber\\
&=&i\lambda\int_t^{+\infty}
W(t-\tau)[|u|u-|W(t){{{{\mathcal F}}}}^{-1}w|
W(t){{{{\mathcal F}}}}^{-1}w](\tau)d\tau\nonumber\\
& &-i\int_t^{+\infty}W(t-\tau)R(\tau)d\tau,\label{MINT}
\end{eqnarray}
where
\begin{eqnarray*}
R
=W(t){{\mathcal F}}^{-1}\left[
\frac{\lambda t^{-1}}
{\sqrt{3\xi_{1}^{2}+1}}|\hat{\psi}_+|
\hat{\psi}_+(\xi)e^{iS_{+}(t,\xi)}\right]
-\lambda|W(t){{\mathcal F}}^{-1}w|W(t){{\mathcal F}}^{-1}w.
\end{eqnarray*}
Next, we apply the contraction mapping principle to 
the integral equation (\ref{MINT})  in a suitable function space. 
In this step the asymptotic formula (Proposition \ref{linear}) 
and the Strichartz estimate (Lemma \ref{S}) for 
the linear equation (\ref{4LS}) play an important role. 
Finally, we show that the solutions of (\ref{FSP2}) 
converge to $u_{+}$ in $L^{2}$ as $t\to\infty$. 

\begin{remark} 
It is not likely that our proof will be applicable for 
the three dimensional critical case (i.e., (\ref{4NLS}) 
with $d=3$ and $p=5/3$) due to the lack of smoothness 
of the nonlinear term.
\end{remark}


By using the argument by Glassey \cite{G} we can prove the non-existence 
of asymptotically free solution for (\ref{4NLS}) with $p\le1+2/d$. 

\begin{theorem}[Nonexistence of asymptotically free solution]\label{nonlinear3}
Let $d\ge2$ and $1<p\le1+2/d$. 
Let $u\in C(\rre,L^{2}(\rre^{d}))$ be a solution to (\ref{4NLS}) with 
$u(0,x)=u_{0}\in L^{2}(\rre^{d})$. Assume that there exists a function 
$\psi_{+}\in H^{0,s}\cap\langle\pt_{x_{1}}\rangle^{p-1}L^{2/p}$  
with $s>(4-d)/2+(d-1)p/2$ 
such that
\begin{eqnarray}
\|u(t)-W(t)\psi_{+}\|_{L_{x}^{2}}\to0,\label{asym}
\end{eqnarray} 
as $t\to\infty$, where $\{W(t)\}_{t\in\rre}$ 
is a unitary group generated by the operator $(1/2)i\Delta-(1/4)i\pt_{x_{1}}^4$. 
Then $u\equiv0$.
\end{theorem}


We introduce several notations and function spaces 
which are used throughout this paper. 
For $\psi\in{{\mathcal S}}'(\rre^{d})$, 
$\hat{\psi}(\xi)={{\mathcal F}}[\psi](\xi)$ 
denote the Fourier transform of $\psi$. Let 
$\langle\xi\rangle=\sqrt{|\xi|^2+1}$. The differential operator
$\langle\nabla\rangle^s=(1-\Delta)^{s/2}$ denotes  
the Bessel potential of order $-s$. 
We define $\langle\pt_{x_{1}}\rangle^{s}={{\mathcal F}}^{-1}
\langle\xi_{1}\rangle^{s}{{\mathcal F}}^{-1}$ for $s\in\rre$. 
For $1\le q,r\le\infty$, $L^q(t,\infty;L_x^r(\rre^{d}))$ is defined 
as follows:
\begin{eqnarray*}
L^q(t,\infty;L_x^r(\rre^{d}))&=&\{u\in{{\mathcal S}}'(\rre^{1+d});
\|u\|_{L^q(t,\infty;L_x^r)}<\infty\},\\
\|u\|_{L^q(t,\infty;L_x^r)}&=&
\left(\int_t^{\infty}\|u(\tau)\|_{L_x^r}^qd\tau\right)^{1/q}.
\end{eqnarray*}
We will use the  Sobolev spaces
\begin{eqnarray*}
H^s(\R^d)=\lbrace \phi \in \mathcal S'(\R^d); 
\|\phi\|_{H^s}=\| 
\langle \nabla\rangle^{s} \phi\|_{L^2}<\infty\rbrace
\end{eqnarray*}
and the weighted Sobolev spaces 
\begin{eqnarray}
H^{m,s}(\R^d)=\lbrace \phi \in \mathcal S'(\R^d); 
\|\phi\|_{H^{m,s}}=\|\langle x\rangle^{s} 
\langle \nabla\rangle^{m} \phi\|_{L^2}<\infty\rbrace.
\label{ws}
\end{eqnarray}
We denote 
various constants by $C$ and so forth. They may differ from 
line to line, when this does not cause any confusion. 

The plan of the present paper is as follows. 
In Section 2, we prove several linear estimates for the fourth order 
Schr\"{o}dinger type equation (\ref{4LS}). In Section 3, 
we prove Theorem 
\ref{nonlinear1} by applying the contraction mapping principle to
the integral equation (\ref{MINT}). 
Finally in Section 4, we give the proof of Theorem 
\ref{nonlinear3}.

\section{Linear Estimates} \label{sec:linear}

In this section, we derive several linear estimates 
that will be crucial for the proof of Theorem 1.1, for  
the fourth order Schr\"{o}dinger type equation
\begin{eqnarray}
\left\{
\begin{array}{l}
\displaystyle{
i\pt_tu+\frac12\Delta u-\frac{1}{4}\pt_{x_{1}}^4u=0,
\qquad t>0,x\in\rre^{2},
}\\
\displaystyle{u(0,x)=\psi(x),\qquad\qquad\qquad\ \  x\in\rre^{2}.}
\end{array}
\right.
\label{4LS}
\end{eqnarray}
The solution to (\ref{4LS}) can be rewritten as
\begin{eqnarray}
u(t,x)=[W(t)\psi](x)
=\frac{1}{2\pi}
\int_{\rre^{2}}e^{ix\xi-\frac{i}{2}t|\xi|^{2}-\frac{i}{4}t\xi_{1}^{4}}
\hat{\psi}(\xi)d\xi.\label{os}
\end{eqnarray}

The following proposition is a 
refinement of \cite[Proposition 2.1.]{SS} for $d=2$ 
and $p=2$. 

\begin{proposition}\label{linear} 
We have
\begin{eqnarray*}
[W(t)\psi](x)=
\frac{t^{-1}}{\sqrt{3\mu_{1}^{2}+1}}\hat{\psi}(\mu)
e^{\frac34it\mu_{1}^{4}+\frac12it|\mu|^{2}
-i\frac{\pi}{2}}+R(t,x)
\end{eqnarray*}
for $t\ge2$,
where $\mu=(\mu_{1},\mu_{2})$ is given by (\ref{mu}) 
and $R$ satisfies
\begin{eqnarray*}
\|R(t)\|_{L_x^{2}}
\le Ct^{-\beta}\|\psi\|_{H_{x}^{0,2}},
\end{eqnarray*}
for $0<\beta<3/4$.
\end{proposition}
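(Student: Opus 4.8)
The plan is to compute the asymptotics of the oscillatory integral

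$$[W(t)\psi](x)=\frac{1}{2\pi}\int_{\R^2}e^{it\phi(\xi;x/t)}\hat\psi(\xi)\,d\xi,\qquad \phi(\xi;v)=v\cdot\xi-\tfrac12|\xi|^2-\tfrac14\xi_1^4,$$

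by the stationary phase method, **and the key point is to track the error uniformly in $x$ and in $L^2_x$ rather than pointwise.** Let me think about how to do this.

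Let me work through what I would do. First, I'd separate the variables. The phase splits as $\phi = (v_2\xi_2 - \tfrac12\xi_2^2) + (v_1\xi_1 - \tfrac12\xi_1^2 - \tfrac14\xi_1^4)$ where $v=x/t$. So the $\xi_2$-integral is the standard Schrödinger oscillatory integral (phase $v_2\xi_2 - \tfrac12\xi_2^2$), stationary at $\mu_2 = v_2 = x_2/t$, with second derivative $-1$. The $\xi_1$-integral has phase $\psi_1(\xi_1) = v_1\xi_1 - \tfrac12\xi_1^2 - \tfrac14\xi_1^4$ with $\psi_1'(\xi_1) = v_1 - \xi_1 - \xi_1^3$, so the stationary point solves $\xi_1^3 + \xi_1 = v_1$, which by Cardano gives exactly the $\mu_1$ in \eqref{mu}, and $\psi_1''(\mu_1) = -(1+3\mu_1^2)$. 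The stationary phase value assembles the amplitude $t^{-1}/\sqrt{3\mu_1^2+1}$ and the phase $e^{\frac34 it\mu_1^4 + \frac12 it|\mu|^2}$ (using $\psi_1(\mu_1) = \tfrac12\mu_1^2 + \tfrac34\mu_1^4$ after substituting $v_1 = \mu_1+\mu_1^3$) and the Maslov factor $e^{-i\pi/2}$ from the product of the two signatures.

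Let me think about the error estimate, which is the heart of the matter. The cleanest route is to first reduce to the one–variable problem in $\xi_1$: since the $\xi_2$-direction is purely quadratic, I can factor $W(t) = W_2(t) W_1(t)$ and invoke the known two-dimensional counterpart in \cite[Proposition 2.1]{SS} as the template, so the real work is the $\xi_1$ integral
$$I(t,v_1) = \int_\R e^{it\psi_1(\xi_1)}g(\xi_1)\,d\xi_1,\qquad \psi_1'' = -(1+3\xi_1^2)<0.$$
Because $\psi_1''$ never vanishes, there is a single nondegenerate stationary point and no turning points, so I would use the substitution $\eta = \psi_1'(\xi_1) = v_1 - \xi_1 - \xi_1^3$ (a global diffeomorphism, since $\psi_1'$ is strictly decreasing) to convert $I$ into a Fourier integral in $\eta$. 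Then I'd write $g\,d\xi_1/d\eta = a(\mu_1) + (\text{remainder})$ and control the remainder via integration by parts, keeping the dependence on $\mu_1$ explicit. **The hard part will be** proving the error bound in $L^2_x$ with the claimed rate $t^{-\beta}$ for every $\beta<3/4$ while only assuming $\psi_+\in H^{0,2}$, i.e.\ only two weights/derivatives: one must exploit that the change of variables $v_1\mapsto\mu_1$ has Jacobian $\partial\mu_1/\partial v_1 = 1/(1+3\mu_1^2)$, so that the $L^2_x$ norm of the main term reproduces a weighted $L^2_\xi$ norm of $\hat\psi$, and then show the remainder $R$ is bounded by $Ct^{-\beta}\|\langle x\rangle^2\psi\|_{L^2}$ by a Plancherel-type argument in the $\eta$-variable rather than any pointwise decay estimate (which would cost too many derivatives near $\mu_1\to\infty$).

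Concretely, after the $\eta$-substitution I would split $\hat\psi(\mu)-\hat\psi$ evaluated at nearby arguments using a first-order Taylor remainder, bound the stationary-phase remainder by $t^{-1}$ times an integral of $\partial_{\xi_1}(g\,d\xi_1/d\eta)$, and then take the $L^2_x$ norm, changing variables back from $x$ to $\mu$ so that the two $x$-weights become two $\xi$-derivatives landing on $\hat\psi$; the surplus decay between $t^{-1}$ (from one integration by parts) and $t^{-3/4}$ (the target $\beta$) is the slack that absorbs the low-frequency and large-$|\mu_1|$ regions, where $1/\sqrt{3\mu_1^2+1}$ and the Jacobian degenerate. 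I expect the genuinely delicate estimate to be the contribution near $\mu_1\to\infty$ (equivalently $|v_1|\to\infty$), where the quartic term dominates and the amplitude decays only like $|\mu_1|^{-1}$, so I would handle that region by a separate non-stationary phase integration by parts exploiting $|\psi_1'|\gtrsim|\xi_1|^3$, trading the loss of amplitude decay against the gain from $t$; patching this far region to the near-stationary region via a smooth cutoff and summing the two $L^2_x$ bounds will give the stated $t^{-\beta}$, $0<\beta<3/4$, with the constant controlled by $\|\psi\|_{H^{0,2}}$.
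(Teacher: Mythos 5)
Your proposal follows essentially the same route as the paper's proof: exact Fresnel evaluation of the purely quadratic $\xi_{2}$-integral (with the factor $e^{iy_{2}^{2}/(2t)}-1$ absorbed into a remainder costing $t^{-1}\|\psi\|_{H^{0,2}}$), stationary phase in $\xi_{1}$ at the Cardano root $\mu_{1}$ with Hessian $-(1+3\mu_{1}^{2})$, a global phase-normalizing change of variables plus integration by parts with explicit $\langle\mu_{1}\rangle$-weights, a Taylor expansion of $\hat{\psi}$ to first order whose remainder is controlled in the $3/2$-H\"{o}lder sense by $\|\pt_{\xi_{1}}^{2}\hat{\psi}\|_{L^{2}}$ (the true source of the cap $\beta<3/4$), and a Plancherel argument using the Jacobian $t(3\mu_{1}^{2}+1)$ of $x\mapsto\mu$ to pass to $L_{x}^{2}$ with only two weights. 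The only cosmetic deviations are your substitution $\eta=\psi_{1}'$ (the paper instead uses an explicit $\eta_{1}$ rendering the phase exactly quadratic, so the main term becomes a pure Fresnel integral) and your cutoff-based patching of the large-$|\mu_{1}|$ region, which the paper avoids by working globally with the auxiliary functions $G$ and $H$ and the two-regime lower bound on $\xi_{1}^{2}+\mu_{1}\xi_{1}+\mu_{1}^{2}+1$.
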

 
\begin{proof}[Proof of Proposition \ref{linear}.]
We easily see 
\begin{eqnarray*}
[W(t)\psi](x)
=
\int_{\rre^{2}}K(t,x-y)\psi(y)dy,
\end{eqnarray*}
where
\begin{eqnarray*}
K(t,z)
=\left(\frac{1}{2\pi}\right)^{2}
\int_{\rre^{2}}e^{iz\xi-\frac{i}{2}t|\xi|^{2}-\frac{i}{4}t\xi_{1}^{4}}
d\xi.
\end{eqnarray*}
By the Fresnel integral formula
\begin{eqnarray*}
\frac{1}{\sqrt{2\pi}}
\int_{\rre}e^{iz_{2}\xi_{2}-\frac{i}{2}t\xi_{2}^{2}}d\xi_{2}
=t^{-\frac12}e^{\frac{iz_{2}^{2}}{2t}-i\frac{\pi}{4}},
\end{eqnarray*}
we have
\begin{eqnarray*}
K(t,z)
=\left(\frac{1}{2\pi}\right)^{\frac{3}{2}}
t^{-\frac{1}{2}}e^{\frac{iz_{2}^{2}}{2t}-i\frac{\pi}{4}}
\int_{\rre}e^{iz_{1}\xi_{1}-\frac{i}{2}t\xi_{1}^{2}-\frac{i}{4}t\xi_{1}^{4}}
d\xi_{1}.
\end{eqnarray*}
Therefore, we find
\begin{eqnarray*}
u(t,x)
=\frac{1}{\sqrt{2\pi}}
t^{-\frac{1}{2}}e^{\frac{i}{2}t\mu_{2}^{2}-i\frac{\pi}{4}}
\int_{\rre}e^{ix_{1}\xi_{1}-\frac{i}{2}t\xi_{1}^{2}-\frac{i}{4}t\xi_{1}^{4}}
{{\mathcal F}}[e^{\frac{iy_{2}^{2}}{2t}}\psi](\xi_{1},\mu_{2})d\xi_{1}.
\end{eqnarray*}
We split $u$ into the following two pieces: 
\begin{eqnarray}
\lefteqn{u(t,x)}\nonumber\\
&=&\frac{1}{\sqrt{2\pi}}
t^{-\frac{1}{2}}e^{\frac{i}{2}t\mu_{2}^{2}-i\frac{1}{4}\pi}
\int_{\rre}e^{ix_{1}\xi_{1}-\frac{i}{2}t\xi_{1}^{2}-\frac{i}{4}t\xi_{1}^{4}}
{{\mathcal F}}[\psi](\xi_{1},\mu_{2})d\xi_{1}
\nonumber\\
& &+\frac{1}{\sqrt{2\pi}}
t^{-\frac{1}{2}}e^{\frac{i}{2}t\mu_{2}^{2}-i\frac{1}{4}\pi}
\int_{\rre}e^{ix_{1}\xi_{1}-\frac{i}{2}t\xi_{1}^{2}-\frac{i}{4}t\xi_{1}^{4}}
{{\mathcal F}}[(e^{\frac{iy_{2}^{2}}{2t}}-1)\psi](\xi_{1},\mu_{2})d\xi_{1}
\nonumber\\
&=:&L(t,x)+R(t,x).
\label{u1}
\end{eqnarray}
To evaluate $L$, we split $L$ into 
\begin{eqnarray}
L(t,x)&=&
\frac{1}{\sqrt{2\pi}}
t^{-\frac{1}{2}}e^{\frac{i}{2}t\mu_{2}^{2}-i\frac{1}{4}\pi}
{{\mathcal F}}[\psi](\mu)
\int_{\rre}e^{ix_{1}\xi_{1}-\frac{i}{2}t\xi_{1}^{2}-\frac{i}{4}t\xi_{1}^{4}}
d\xi_{1}\nonumber\\
& &+
\frac{1}{\sqrt{2\pi}}
t^{-\frac{1}{2}}e^{\frac{i}{2}t\mu_{2}^{2}-i\frac{1}{4}\pi}
\pt_{\xi_{1}}{{\mathcal F}}[\psi](\mu)
\int_{\rre}e^{ix_{1}\xi_{1}-\frac{i}{2}t\xi_{1}^{2}-\frac{i}{4}t\xi_{1}^{4}}
(\xi_{1}-\mu_{1})d\xi_{1}\nonumber\\
& &+\frac{1}{\sqrt{2\pi}}
t^{-\frac{1}{2}}e^{\frac{i}{2}t\mu_{2}^{2}-i\frac{1}{4}\pi}
\int_{\rre}e^{ix_{1}\xi_{1}-\frac{i}{2}t\xi_{1}^{2}-\frac{i}{4}t\xi_{1}^{4}}
\nonumber\\
& &\qquad\quad\times
({{\mathcal F}}[\psi](\xi_{1},\mu_{2})
-{{\mathcal F}}[\psi](\mu_{1},\mu_{2})
-\pt_{\xi_{1}}{{\mathcal F}}[\psi](\mu)(\xi_{1}-\mu_{1}))d\xi_{1}\nonumber\\
&=:&L_{1}(t,x)+L_{2}(t,x)+L_{3}(t,x).
\label{u2}
\end{eqnarray}
We rewrite $L_{1}$ as follows:
\begin{eqnarray*}
L_{1}(t,x)=
\frac{1}{\sqrt{2\pi}}
t^{-\frac{1}{2}}
e^{\frac34it\mu_{1}^{4}+\frac{i}{2}t|\mu|^{2}-i\frac{1}{4}\pi}
{{\mathcal F}}[\psi](\mu)
\int_{\rre}e^{-itS(\mu_{1},\xi_{1})}d\xi_{1},
\end{eqnarray*}
where 
$S(\mu_{1},\xi_{1})$ is defined by 
\begin{eqnarray*}
S(\mu_{1},\xi_{1})
=\frac{1}{4}\xi_{1}^{4}+\frac{1}{2}\xi_{1}^{2}
-(\mu_{1}^{3}+\mu_{1})\xi_{1}+\frac34\mu_{1}^{4}+\frac12\mu_{1}^{2}.
\end{eqnarray*}
Let
\begin{eqnarray*}
\eta_{1}
=\mu_{1}+\frac{1}{\sqrt{2}}\frac{1}{\sqrt{3\mu_{1}^2+1}}
(\xi_{1}-\mu_{1})
\sqrt{\xi_{1}^2+2\mu_{1}\xi_{1}+3\mu_{1}^2+2}.
\end{eqnarray*}
Then, $L_{1}$ can be rewritten as follows:
\begin{eqnarray}
\lefteqn{L_{1}(t,x)}\nonumber\\
&=&
\frac{1}{\sqrt{2\pi}}
t^{-\frac{1}{2}}
{{\mathcal F}}[\psi](\mu)
e^{\frac34it\mu_{1}^{4}+\frac{i}{2}t|\mu|^{2}-i\frac{1}{4}\pi}
\nonumber\\
& &\times
\left\{\int_{\rre}
e^{-itS(\mu_{1},\xi_{1})}
\left.\frac{d\eta_{1}}{d\xi_{1}}d\xi_{1}
-\frac{d^{2}\eta_{1}}{d\xi_{1}^{2}}\right|_{\xi_{1}=\mu_{1}}\int_{\rre}
e^{-itS(\mu_{1}\xi_{1})}
(\xi_{1}-\mu_{1})d\xi_{1}
\right.\nonumber\\
& &\quad+
\left.
\int_{\rre}
e^{-itS(\mu_{1}\xi_{1})}
(1-\frac{d\eta_{1}}{d\xi_{1}}+
\left.\frac{d^{2}\eta_{1}}{d\xi_{1}^{2}}\right|_{\xi_{1}=\mu_{1}}(\xi_{1}-\mu_{1}))d\xi_{1}
\right\}\nonumber\\
&=:&
t^{-\frac{1}{2}}
{{\mathcal F}}[\psi](\mu)
e^{\frac34it\mu_{1}^{4}+\frac{i}{2}t|\mu|^{2}-i\frac{1}{4}\pi}
(L_{1,1}(t,x)+L_{1,2}(t,x)+L_{1,3}(t,x)).
\nonumber\\
\label{u3}
\end{eqnarray}
For $L_{1,1}$, changing the variable $\xi_{1}\mapsto\eta_{1}$, 
we have
\begin{eqnarray*}
L_{1,1}(t,x)=\frac{1}{\sqrt{2\pi}}
\int_{\rre}
e^{-\frac12it(3\mu_{1}^2+1)(\eta_{1}-\mu_{1})^2}d\eta_{1}.
\end{eqnarray*}
In addition, changing the variable $\zeta_{1}=(1/\sqrt{2})t^{1/2}
\sqrt{3\mu_{1}^2+1}(\eta_{1}-\mu_{1})$ ($\eta_{1}\mapsto\zeta_{1}$) and 
using the Fresnel integral formula, we obtain
\begin{eqnarray}
L_{1,1}(t,x)=\sqrt{\frac{2}{\pi}}
\frac{t^{-1/2}}{\sqrt{3\mu_{1}^{2}+1}}
\int_{\rre}e^{-i\zeta^2}d\zeta
=
\frac{t^{-1/2}}{\sqrt{3\mu_{1}^{2}+1}}e^{-i\frac{\pi}{4}}.
\label{u4}
\end{eqnarray}
Next we evaluate $L_{1,2}$. Integrating by parts via the identity
\begin{eqnarray}
e^{-itS(\mu_{1},\xi_{1})}(\xi_{1}-\mu_{1})
=it^{-1}G(\mu_{1},\xi_{1})\pt_{\xi_{1}}e^{-itS(\mu_{1},\xi_{1})}
\label{ip1}
\end{eqnarray}
with 
\begin{eqnarray*}
G(\mu_{1},\xi_{1})=\frac{1}{\xi_{1}^{2}+\mu_{1}\xi_{1}+\mu_{1}^{2}+1},
\end{eqnarray*}
we have
\begin{eqnarray*}
\int_{\rre}e^{-itS(\mu_{1},\xi_{1})}(\xi_{1}-\mu_{1})d\xi_{1}
=-it^{-1}\int_{\rre}e^{-it
S(\mu_{1},\xi_{1})}\pt_{\xi_{1}}G(\mu_{1},\xi_{1})
d\xi_{1}.
\end{eqnarray*}
Furthermore, integrating by parts via the identity
\begin{eqnarray}
e^{-itS(\mu_{1},\xi_{1})}
=H(t,\mu_{1},\xi_{1})\pt_{\xi_{1}}\{(\xi_{1}-\mu_{1})e^{-itS(\mu_{1},\xi_{1})}\}
\label{ip2}
\end{eqnarray}
with 
\begin{eqnarray*}
H(t,\mu_{1},\xi_{1})=\frac{1}
{1-it(\xi_{1}-\mu_{1})^{2}(\xi_{1}^{2}+\mu_{1}\xi_{1}+\mu_{1}^{2}+1)},
\end{eqnarray*}
we obtain
\begin{eqnarray*}
\lefteqn{\int_{\rre}e^{-itS(\mu_{1},\xi_{1})}(\xi_{1}-\mu_{1})d\xi_{1}}\\
&=&it^{-1}\int_{\rre}e^{-itS(\mu_{1},\xi_{1})}(\xi_{1}-\mu_{1})
\pt_{\xi_{1}}\left\{
H(t,\mu_{1},\xi_{1})
\pt_{\xi_{1}}G(\mu_{1},\xi_{1})
\right\}d\xi_{1}\\
&=&it^{-1}\int_{\rre}e^{-itS(\mu_{1},\xi_{1})}(\xi_{1}-\mu_{1})
H(t,\mu_{1},\xi_{1})
\pt_{\xi_{1}}^{2}G(\mu_{1},\xi_{1})
d\xi_{1}\\
& &+it^{-1}\int_{\rre}e^{-itS(\mu_{1},\xi_{1})}(\xi_{1}-\mu_{1})
\pt_{\xi_{1}}H(t,\mu_{1},\xi_{1})
\pt_{\xi_{1}}G(\mu_{1},\xi_{1})
d\xi_{1}.
\end{eqnarray*}
Using the inequalities
\begin{eqnarray}
\left|\pt_{\xi_{1}}^{j}G(\mu_{1},\xi_{1})
\right|&\le& C\langle\mu_{1}\rangle^{-j-2},\label{seo}\\
\left|\pt_{\xi_{1}}^{j}H(t,\mu_{1},\xi_{1})
\right|&\le& C
\frac{|\xi_{1}-\mu_{1}|^{-j}}{1+t(\xi_{1}-\mu_{1})^{2}(\xi_{1}^{2}+\mu_{1}\xi_{1}+\mu_{1}^{2}+1)}\label{se}
\end{eqnarray}
for $j=0,1,2$, we have 
\begin{eqnarray*}
\lefteqn{\left|
\int_{\rre}e^{-itS(\mu_{1},\xi_{1})}(\xi_{1}-\mu_{1})d\xi_{1}
\right|}\\
&\le&Ct^{-1}\langle\mu_{1}\rangle^{-4}\int_{\rre}
\frac{|\xi_{1}-\mu_{1}|+\langle\mu_{1}\rangle
}{1+t(\xi_{1}-\mu_{1})^{2}(\xi_{1}^{2}+\mu_{1}\xi_{1}+\mu_{1}^{2}+1)}
d\xi_{1}.
\end{eqnarray*}
By using the inequalities 
\begin{eqnarray*}
\xi_{1}^2+\mu_{1}\xi_{1}+\mu_{1}^2+1\ge
\left\{
\begin{array}{l}
\medskip\displaystyle{
\frac12\langle\mu_{1}\rangle^{2},\ \qquad\text{if}
\ |\xi_{1}-\mu_{1}|\le\langle\mu_{1}\rangle,}\\
\displaystyle{
\frac14(\xi_{1}-\mu_{1})^2,\qquad\text{if}
\ |\xi_{1}-\mu_{1}|\ge\langle\mu_{1}\rangle,}
\end{array}
\right.
\end{eqnarray*}
we see 
\begin{eqnarray}
\lefteqn{\left|\int_{\rre}e^{-itS(\mu_{1},\xi_{1})}(\xi_{1}-\mu_{1})d\xi_{1}
\right|}\nonumber\\
&\le&Ct^{-1}\langle\mu_{1}\rangle^{-3}\int_{|\xi_{1}-\mu_{1}|\le\langle\mu_{1}\rangle}
\frac{1}{
1+t\langle\mu_{1}\rangle^{2}(\xi_{1}-\mu_{1})^2}d\xi_{1}
\nonumber\\
& &+Ct^{-1}\langle\mu_{1}\rangle^{-4}
\int_{|\xi_{1}-\mu_{1}|\ge\langle\mu_{1}\rangle}
\frac{|\xi_{1}-\mu_{1}|}{1+t(\xi_{1}-\mu_{1})^4}d\xi_{1}\nonumber\\
&\le&Ct^{-1-\gamma}\langle\mu_{1}\rangle^{-2\gamma-3}
\int_{|\xi_{1}-\mu_{1}|\le\langle\mu_{1}\rangle}
|\xi_{1}-\mu_{1}|^{-2\gamma}d\xi_{1}
\nonumber\\
& &+Ct^{-2}\langle\mu_{1}\rangle^{-4}
\int_{|\xi_{1}-\mu_{1}|\ge\langle\mu_{1}\rangle}
|\xi_{1}-\mu_{1}|^{-3}d\xi_{1}\nonumber\\
&\le&C(t^{-1-\gamma}\langle\mu_{1}\rangle^{-4\gamma-2}
+t^{-2}\langle\mu_{1}\rangle^{-6})\nonumber\\
&\le&Ct^{-1-\gamma}\langle\mu_{1}\rangle^{-4\gamma-2},
\label{short}
\end{eqnarray}
where $0<\gamma<1/2$. 
Hence
\begin{eqnarray}
|L_{1,2}(t,x)|
\le
Ct^{-1-\gamma}\left|
\left.\frac{d^{2}\eta_{1}}{d\xi^{2}}
\right|_{\xi_{1}=\mu_{1}}\right|\langle\mu_{1}\rangle^{-4\gamma-2}
\le Ct^{-1-\gamma}\langle\mu_{1}\rangle^{-4\gamma-3}.
\label{short3}
\end{eqnarray}

For $L_{1,3}$, integrating by parts via the identity (\ref{ip2}), we have
\begin{eqnarray*}
L_{1,3}(t,x)
&=&-\frac{1}{\sqrt{2\pi}}\int_{\rre}(\xi_{1}-\mu_{1})e^{-itS(\mu_{1},\xi_{1})}\\
& &
\qquad\times
\pt_{\xi_{1}}\left\{
H(t,\mu_{1},\xi_{1})(1-\frac{d\eta_{1}}{d\xi_{1}}+
\left.\frac{d^{2}\eta_{1}}{d\xi_{1}^{2}}\right|_{\xi_{1}=\mu_{1}}(\xi_{1}-\mu_{1}))\right\}d\xi_{1}.
\nonumber
\end{eqnarray*}
Furthermore, integrating by parts via the identity (\ref{ip1}), 
we have
\begin{eqnarray}
\lefteqn{L_{1,3}(t,x)}
\nonumber\\
&=&\frac{it^{-1}}{\sqrt{2\pi}}\int_{\rre}e^{-itS(\mu_{1},\xi_{1})}
\nonumber\\
& &\quad\times
\pt_{\xi_{1}}\left[G(\mu_{1},\xi_{1})
\pt_{\xi_{1}}\left\{H(t,\mu_{1},\xi_{1})
(1-\frac{d\eta_{1}}{d\xi_{1}}+\left.\frac{d^{2}\eta_{1}}{
d\xi_{1}^{2}}\right|_{\xi_{1}=\mu_{1}}(\xi_{1}-\mu_{1}))\right\}\right]d\xi_{1}\nonumber\\
&=&\frac{it^{-1}}{\sqrt{2\pi}}\int_{\rre}e^{-itS(\mu_{1},\xi_{1})}
F_{1}(\mu_{1},\xi_{1})\frac{d^{3}\eta_{1}}{d\xi_{1}^{3}}d\xi_{1}\nonumber\\
& &+\frac{it^{-1}}{\sqrt{2\pi}}\int_{\rre}e^{-itS(\mu_{1},\xi_{1})}
F_{2}(\mu_{1},\xi_{1})\left(\frac{d^{2}\eta_{1}}{d\xi_{1}^{2}}-\left.\frac{d^{2}\eta_{1}}{
d\xi_{1}^{2}}\right|_{\xi_{1}=\mu_{1}}\right)d\xi_{1}\nonumber\\
& &+\frac{it^{-1}}{\sqrt{2\pi}}\int_{\rre}e^{-itS(\mu_{1},\xi_{1})}
F_{3}(\mu_{1},\xi_{1})\left(1-\frac{d\eta_{1}}{d\xi_{1}}+\left.\frac{d^{2}\eta_{1}}{
d\xi_{1}^{2}}\right|_{\xi_{1}=\mu_{1}}(\xi_{1}-\mu_{1})\right)d\xi_{1},
\nonumber\\
\label{long}
\end{eqnarray}
where
\begin{eqnarray*}
F_{1}(\mu_{1},\xi_{1})
&=&
-G(\mu_{1},\xi_{1})H(t,\mu_{1},\xi_{1}),\\
F_{2}(\mu_{1},\xi_{1})
&=&-2
G(\mu_{1},\xi_{1})
\pt_{\xi_{1}}H(t,\mu_{1},\xi_{1})
-\pt_{\xi_{1}}G(\mu_{1},\xi_{1})H(t,\mu_{1},\xi_{1}),
\nonumber\\
F_{3}(\mu_{1},\xi_{1})
&=&G(\mu_{1},\xi_{1})
\pt_{\xi_{1}}^{2}
H(t,\mu_{1},\xi_{1})
+\pt_{\xi_{1}}G(\mu_{1},\xi_{1})
\pt_{\xi_{1}}H(t,\mu_{1},\xi_{1}).
\end{eqnarray*}
Since $\displaystyle{\left|\frac{d^3\eta_{1}}{d\xi_{1}^3}\right|
\le C\langle\mu_{1}\rangle^{-2}}$,
we see that
\begin{eqnarray*}
\left|\frac{d^2\eta_{1}}{d\xi_{1}^2}-
\left.\frac{d^2\eta_{1}}{d\xi_{1}^2}\right|_{\xi_{1}=\mu_{1}}\right|
&\le&\sup_{\xi_{1}\in\rre}\left|\frac{d^3\eta_{1}}{d\xi_{1}^3}\right|
|\xi_{1}-\mu_{1}|
\le C\langle\mu_{1}\rangle^{-2}|\xi_{1}-\mu_{1}|,
\end{eqnarray*}
\begin{eqnarray*}
\lefteqn{\left|1-\frac{d\eta_{1}}{d\xi_{1}}+
\left.\frac{d^2\eta_{1}}{d\xi_{1}^2}\right|_{\xi_{1}=\mu_{1}}(\xi_{1}-\mu_{1})\right|}\\
&=&\left|\left.\frac{d\eta_{1}}{d\xi_{1}}\right|_{\xi_{1}=\mu_{1}}
-\frac{d\eta_{1}}{d\xi_{1}}+
\left.\frac{d^2\eta_{1}}{d\xi_{1}^2}\right|_{\xi_{1}=\mu_{1}}(\xi_{1}-\mu_{1})\right|\\
&\le&\frac12\sup_{\xi_{1}\in\rre}\left|\frac{d^3\eta_{1}}{d\xi_{1}^3}\right|
|\xi_{1}-\mu_{1}|^{2}\le C\langle\mu_{1}\rangle^{-2}|\xi_{1}-\mu_{1}|^{2}.
\end{eqnarray*}
Combining (\ref{seo}) and (\ref{se}) with the above three  
inequalities, we have
\begin{eqnarray*}
|L_{1,3}(t,x)|
\le Ct^{-1}\langle\mu_{1}\rangle^{-5}\int_{\rre}
\frac{|\xi_{1}-\mu_{1}|+\langle\mu_{1}\rangle
}{1+t(\xi_{1}-\mu_{1})^{2}(\xi_{1}^{2}+\mu_{1}\xi_{1}+\mu_{1}^{2}+1)}
d\xi_{1}.
\end{eqnarray*}
Hence, by an argument similar to (\ref{short}), we have
\begin{eqnarray}
|L_{1,3}(t,x)|
\le Ct^{-1-\gamma}\langle\mu_{1}\rangle^{-4\gamma-3},
\label{u5}
\end{eqnarray}
where $0<\gamma<1/2$.
By (\ref{u3}), (\ref{u4}), (\ref{short3}) and (\ref{u5}), we have
\begin{eqnarray}
L_{1}(t,x)
=\frac{t^{-1}}{\sqrt{3\mu_{1}^{2}+1}}{{\mathcal F}}[\psi](\mu)
e^{\frac34it\mu_{1}^{4}+\frac{i}{2}t|\mu|^{2}-i\frac{\pi}{2}}
+R_{1}(t,x),\label{l}
\end{eqnarray}
where $R_{1}$ satisfies
\begin{eqnarray*}
|R_{1}(t,x)|\le 
Ct^{-\frac{3}{2}-\gamma}\langle\mu_{1}\rangle^{-4\gamma-3}
|{{\mathcal F}}[\psi](\mu)|
\end{eqnarray*}
with $0<\gamma<1/2$. Hence the Plancherel identity yield 
\begin{eqnarray}
\|R_{1}(t)\|_{L_{x}^{2}}
&\le&
Ct^{-\frac{1}{2}-\gamma}
\|\langle\mu_{1}\rangle^{-4\gamma-2}
{{\mathcal F}}[\psi](\mu)
\|_{L_{\mu}^{2}}\nonumber\\
&\le&
Ct^{-\frac{1}{2}-\gamma}
\|{{\mathcal F}}[\psi](\mu)
\|_{L_{\mu}^{2}}\nonumber\\
&=&Ct^{-\frac{1}{2}-\gamma}
\|\psi\|_{L_{x}^{2}}.\label{r11}
\end{eqnarray}

Next we evaluate $L_{2}$. 
By (\ref{short}), we obtain
\begin{eqnarray*}
|L_{2}(t,x)|
&\le&Ct^{-\frac{3}{2}-\gamma}\langle\mu_{1}\rangle^{-4\gamma-2}
|\pt_{\xi_{1}}{{\mathcal F}}[\psi](\mu)|.
\end{eqnarray*}
By an argument similar to that in (\ref{r11}), we have
\begin{eqnarray}
\|L_{2}(t)\|_{L_{x}^{2}}
\le Ct^{-\frac12-\gamma}
\|\psi\|_{H^{0,1}}.\label{u7}
\end{eqnarray}
Next, we evaluate $L_{3}$. We write 
\begin{eqnarray*}
L_{3}(t,x)=:
t^{-\frac{1}{2}}
e^{\frac34it\mu_{1}^{4}+\frac{i}{2}t|\mu|^{2}-i\frac{1}{4}\pi}
\tilde{L}_{3}(t,x).
\end{eqnarray*}
The same argument as that in (\ref{long}) yields that 
$\tilde{L}_{3}$ is equal to the right hand side of (\ref{long}) 
by replacing $1-\frac{d\eta_{1}}{d\xi_{1}}+
\left.\frac{d^{2}\eta_{1}}{d\xi_{1}^{2}}\right|_{\xi_{1}=\mu_{1}}(\xi_{1}-\mu_{1})$ 
by ${{\mathcal F}}[\psi](\xi_{1},\mu_{2})
-{{\mathcal F}}[\psi](\mu_{1},\mu_{2})
-\pt_{\xi_{1}}{{\mathcal F}}[\psi](\mu)(\xi_{1}-\mu_{1})
$. Since
\begin{eqnarray*}
\left|
\pt_{\xi_{1}}{{\mathcal F}}[\psi](\xi_{1},\mu_{2})-\pt_{\xi_{1}}{{\mathcal F}}[\psi](\mu)
\right|
&\le&\|\pt_{\xi_{1}}^{2}{{\mathcal F}}[\psi](\cdot,\mu_{2})
\|_{L_{\xi_{1}}^{2}}|\xi_{1}-\mu_{1}|^{\frac12},
\end{eqnarray*}
\begin{eqnarray*}
\lefteqn{\left|{{\mathcal F}}[\psi](\xi_{1},\mu_{2})
-{{\mathcal F}}[\psi](\mu_{1},\mu_{2})
-\pt_{\xi_{1}}{{\mathcal F}}[\psi](\mu)(\xi_{1}-\mu_{1})
\right|}\qquad\qquad\qquad\qquad\qquad\qquad\\
&\le&\frac23\|\pt_{\xi_{1}}^{2}{{\mathcal F}}[\psi](\cdot,\mu_{2})\|_{L_{\xi_{1}}^{2}}
|\xi_{1}-\mu_{1}|^{\frac32},
\end{eqnarray*}
we have
\begin{eqnarray*}
\lefteqn{|\tilde{L}_{3}(t,x)|}\\
&\le& 
Ct^{-1}\|\pt_{\xi_{1}}^{2}{{\mathcal F}}[\psi]
(\cdot,\mu_{2})\|_{L_{\xi_{1}}^{2}}\\
& &
\qquad\times\left\{\langle\mu_{1}\rangle^{-2}
\left(\int_{\rre}
\frac{1}{
\{1+t(\xi_{1}-\mu_{1})^{2}(\xi_{1}^2+\mu_{1}\xi_{1}+\mu_{1}^2+1)\}^{2}}d\xi_{1}
\right)^{\frac12}\right.\\
& &\qquad\ \ \ \left.+
\langle\mu_{1}\rangle^{-3}
\int_{\rre}
\frac{|\xi_{1}-\mu_{1}|^{\frac12}+\langle\mu_{1}\rangle|\xi_{1}-\mu_{1}|^{-\frac12}}{
1+t(\xi_{1}-\mu_{1})^{2}(\xi_{1}^2+\mu_{1}\xi_{1}+\mu_{1}^2+1)}d\xi_{1}\right\}.
\end{eqnarray*}
By an argument similar to that in (\ref{short}), we obtain 
\begin{eqnarray*}
|\tilde{L}_{3}(t,x)|
\le Ct^{-1-\gamma}\langle\mu_{1}\rangle^{-4\gamma-\frac32}
\|\pt_{\xi_{1}}^{2}{{\mathcal F}}[\psi]
(\cdot,\mu_{2})\|_{L_{\xi_{1}}^{2}},
\end{eqnarray*}
where $0<\gamma<1/4$. Hence
\begin{eqnarray*}
\|L_{3}(t)\|_{L_{x_{1}}^{2}}
&\le& Ct^{-\frac{3}{2}-\gamma}
\|\langle\mu_{1}\rangle^{-4\gamma-\frac32}\|_{L_{x_{1}}^{2}}
\|\pt_{\xi_{1}}^{2}{{\mathcal F}}[\psi](\cdot,\mu_{2})
\|_{L_{\xi_{1}}^{2}}\\
&\le& Ct^{-1-\gamma}
\|\langle\mu_{1}\rangle^{-4\gamma-\frac12}\|_{L_{\mu_{1}}^{2}}
\|\pt_{\xi_{1}}^{2}{{\mathcal F}}[\psi](\cdot,\mu_{2})
\|_{L_{\xi_{1}}^{2}}\\
&\le& Ct^{-1-\gamma}
\|\pt_{\xi_{1}}^{2}{{\mathcal F}}[\psi](\cdot,\mu_{2}).
\|_{L_{\xi_{1}}^{2}}
\end{eqnarray*}
Combining the above inequality and the Plancherel identity, 
we have 
\begin{eqnarray}
\|L_{3}(t)\|_{L_{x}^{2}}
&\le&Ct^{-1-\gamma}
\|\|\pt_{\xi_{1}}^{2}{{\mathcal F}}[\psi](\cdot,\mu_{2})
\|_{L_{\mu_{1}}^{2}}\|_{L_{x_{2}}^{2}}
\nonumber\\
&\le&Ct^{-\frac12-\gamma}
\|\psi\|_{H^{0,2}}.\label{r12}
\end{eqnarray}
Finally let us evaluate $R$. $R$ can be rewritten as 
\begin{eqnarray*}
R=
t^{-\frac{1}{2}}e^{\frac{i}{2}t|\mu_{2}|^{2}-i\frac{1}{4}\pi}
W_{4LS}(t){{\mathcal F}}_{x_{2}\mapsto\xi_{2}}
[(e^{\frac{iy_{2}^{2}}{2t}}-1)\psi](x_{1},\mu_{2}),
\end{eqnarray*}
where $\{W_{4LS}(t)\}_{t\in\rre}$ is a unitary group generated by 
the linear operator $(i/2)\pt_{x_{1}}^{2}
-(i/4)\pt_{x_{1}}^{4}$:
\begin{eqnarray*}
W_{4LS}(t)\phi=\frac{1}{\sqrt{2\pi}}
\int_{\rre}e^{ix_{1}\xi_{1}-\frac{i}{2}t\xi_{1}^{2}-\frac{i}{4}t\xi_{1}^{4}}
{{\mathcal F}}_{x_{1}\mapsto\xi_{1}}[\phi](\xi_{1})d\xi_{1}.
\end{eqnarray*}
Then, we obtain
\begin{eqnarray*}
\|R(t)\|_{L_{x_{1}}^{2}}
&=&t^{-\frac{1}{2}}\|
{{\mathcal F}}_{x_{2}\mapsto\xi_{2}}[(e^{\frac{iy_{2}^{2}}{2t}}-1)
\psi](x_{1},\mu_{2})\|_{L_{x_{1}}^{2}}.
\end{eqnarray*}
Combining the above identity and the Plancherel identity, 
we have
\begin{eqnarray}
\|R(t)\|_{L_{x}^{2}}
&=&t^{-\frac{1}{2}}\|\|
{{\mathcal F}}_{x_{2}\mapsto\xi_{2}}[(e^{\frac{iy_{2}^{2}}{2t}}-1)
\psi](x_{1},\mu_{2})\|_{L_{x_{1}}^{2}}\|_{L_{x_{2}}^{2}}
\nonumber\\
&=&t^{-\frac{1}{2}}\|\|
{{\mathcal F}}_{x_{2}\mapsto\xi_{2}}[(e^{\frac{iy_{2}^{2}}{2t}}-1)
\psi](x_{1},\mu_{2})\|_{L_{x_{2}}^{2}}\|_{L_{x_{1}}^{2}}
\nonumber\\
&=&\|\|
{{\mathcal F}}_{x_{2}\mapsto\xi_{2}}[(e^{\frac{iy_{2}^{2}}{2t}}-1)
\psi](x_{1},\mu_{2})\|_{L_{\mu_{2}}^{2}}\|_{L_{x_{1}}^{2}}
\nonumber\\
&=&\|
(e^{\frac{ix_{2}^{2}}{2t}}-1)
\psi\|_{L_{x}^{2}}\nonumber\\
&\le& Ct^{-1}\|
\psi\|_{H_{x}^{0,2}}.
\label{r14}
\end{eqnarray}
Collecting (\ref{u1}), (\ref{u2}), (\ref{l}), 
(\ref{r11}), (\ref{u7}), (\ref{r12}) and (\ref{r14}), 
we obtain the desired result. 
\end{proof}


To prove Theorem \ref{nonlinear1}, 
we employ the decay estimate and the Strichartz 
estimate for 
the linear fourth order Schr\"{o}dinger equation 
(\ref{4LS}). 

\begin{lemma}\label{S} 
Let $W(t)$ be given by (\ref{os}). 

\vskip2mm
\noindent
(i) Let $2\le p\le\infty$. Then, the inequality 
\begin{equation*}
\|\langle\pt_{x_{1}}\rangle^{1-\frac{2}{p}}W(t)\psi\|_{L_{x}^{p}}
\le Ct^{-d(\frac12-\frac1p)}\|\psi\|_{L_{x}^{p'}}
\end{equation*}
holds.

\vskip2mm
\noindent
(ii) Let $(q_{j},r_{j})$ ($j=1,2$) satisfy 
$1/q_{j}+1/r_{j}=1/2$ and $2\le r_{j}<\infty$. Then, the inequality 
\begin{equation*}
\left\|\langle\pt_{x_{1}}\rangle^{\frac{1}{q_{1}}}
\int_t^{+\infty}W(t-t')F(t')dt'
\right\|_{L_t^{q_{1}}(t,\infty;L_{x}^{r_{1}})}
\le C
\|\langle\pt_{x_{1}}\rangle^{-\frac{1}{q_{2}}}
F\|_{L_t^{q_{2}'}(t,\infty;L_{x}^{r_{2}'})} 
\end{equation*}
holds.
\end{lemma}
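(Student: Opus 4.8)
The plan is to exploit the separable structure of the symbol. Since $\tfrac12|\xi|^2+\tfrac14\xi_1^4=\bigl(\tfrac12\xi_1^2+\tfrac14\xi_1^4\bigr)+\tfrac12\xi_2^2$, the propagator factorizes as $W(t)=W_1(t)\otimes W_2(t)$, where $W_2(t)$ is the one-dimensional Schr\"odinger group in $x_2$ and $W_1(t)$ is the one-dimensional group in $x_1$ generated by $\tfrac{i}{2}\pt_{x_1}^2-\tfrac{i}{4}\pt_{x_1}^4$; correspondingly the kernel of $W(t)$ is the product $K(t,z)=K_1(t,z_1)K_2(t,z_2)$ already isolated in the proof of Proposition \ref{linear}. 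For (i) I would establish two endpoint kernel bounds and interpolate; for (ii) I would run the $TT^*$ machinery on the dispersive estimate of (i) and close with the Christ--Kiselev lemma.

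For (i), the $x_2$-factor obeys the classical one-dimensional bound $\|K_2(t)\|_{L^\infty}\le Ct^{-1/2}$. For the $x_1$-factor the phase $\phi(\xi_1)=\tfrac{z_1}{t}\xi_1-\tfrac12\xi_1^2-\tfrac14\xi_1^4$ has $\phi''(\xi_1)=-1-3\xi_1^2$, so $|\phi''|\sim\langle\xi_1\rangle^2$ is nondegenerate and \emph{grows} at high frequency. A dyadic stationary-phase/van der Corput analysis (as in Ben-Artzi--Koch--Saut \cite{BKS}) then produces a gain of one derivative at the endpoint, $\|\langle\pt_{x_1}\rangle W_1(t)\|_{L^1_{x_1}\to L^\infty_{x_1}}\le Ct^{-1/2}$: on the shell $|\xi_1|\sim N$ the second-derivative van der Corput bound contributes $N\cdot(tN^2)^{-1/2}=t^{-1/2}$ near the single stationary point, while the nonstationary shells are controlled by repeated integration by parts and sum in $N$. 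Multiplying the two kernel bounds gives the weighted endpoint $\|\langle\pt_{x_1}\rangle W(t)\|_{L^1_x\to L^\infty_x}\le Ct^{-1}$. Together with the trivial unitary bound $\|W(t)\|_{L^2_x\to L^2_x}=1$ (weight $\langle\pt_{x_1}\rangle^0$), Stein's analytic interpolation applied to the family $\langle\pt_{x_1}\rangle^\theta W(t)$ with $\theta=1-\tfrac2p$ yields exactly $\|\langle\pt_{x_1}\rangle^{1-2/p}W(t)\psi\|_{L^p_x}\le Ct^{-(1-2/p)}\|\psi\|_{L^{p'}_x}$, i.e. the decay $t^{-d(1/2-1/p)}$ with $d=2$.

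For (ii), I would first record that along an admissible pair ($1/q+1/r=1/2$, $2<r<\infty$) the weight and the decay in (i) coincide, $1-\tfrac2r=\tfrac2q$ and $2(\tfrac12-\tfrac1r)=\tfrac2q$, so (i) reads $\|\langle\pt_{x_1}\rangle^{2/q}W(t)\psi\|_{L^r_x}\le Ct^{-2/q}\|\psi\|_{L^{r'}_x}$. Running $TT^*$ on $T\psi=\langle\pt_{x_1}\rangle^{1/q}W(t)\psi$ gives $\|\int\langle\pt_{x_1}\rangle^{2/q}W(t-s)F(s)\,ds\|_{L^r_x}\le C\int|t-s|^{-2/q}\|F(s)\|_{L^{r'}_x}\,ds$, and the one-dimensional Hardy--Littlewood--Sobolev inequality with exponent $2/q\in(0,1)$ (whose balance $1+\tfrac1q=\tfrac2q+\tfrac1{q'}$ is automatic) yields the homogeneous estimate $\|\langle\pt_{x_1}\rangle^{1/q}W(t)\psi\|_{L^q_t L^r_x}\le C\|\psi\|_{L^2_x}$ (the case $r=2$, $q=\infty$ being mere $L^2$ conservation). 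Dualizing this for the pair $(q_2,r_2)$ and composing with the estimate for $(q_1,r_1)$ gives the untruncated bound $\|\langle\pt_{x_1}\rangle^{1/q_1}\int_\rre W(t-s)F(s)\,ds\|_{L^{q_1}_tL^{r_1}_x}\le C\|\langle\pt_{x_1}\rangle^{-1/q_2}F\|_{L^{q_2'}_t L^{r_2'}_x}$. Finally, since $2\le r_j<\infty$ forces $q_j>2$ and hence $q_2'<2<q_1$, the Christ--Kiselev lemma upgrades the full-line operator to the retarded one $\int_t^{+\infty}$, which is the claimed inequality.

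The main obstacle is the weighted $L^1$--$L^\infty$ endpoint in (i): although the phase is nondegenerate, extracting precisely one derivative while preserving the clean rate $t^{-1/2}$ requires the dyadic sum over frequency shells to converge uniformly in $z_1$, with the stationary shell furnishing the main term and the nonstationary shells decaying fast enough to be summed; this is exactly where I would invoke the stationary-phase estimates of \cite{BKS}. A secondary point is the borderline in the Christ--Kiselev step, but it is avoided here since $r_j<\infty$ keeps each $q_j$ strictly above $2$, so that $q_1>q_2'$ holds strictly.
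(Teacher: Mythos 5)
Your proposal is correct in substance, but note that the paper does not actually prove Lemma \ref{S}: its entire proof is the citation ``See \cite[Theorem 3.1, Theorem 3.2]{KPV} for instance.'' What you have written is therefore a self-contained reconstruction of the machinery behind that citation, and it is the right machinery. The tensor factorization $W(t)=W_1(t)\otimes W_2(t)$, coming from the split symbol $\bigl(\tfrac12\xi_1^2+\tfrac14\xi_1^4\bigr)+\tfrac12\xi_2^2$, is the same one the paper itself exploits inside the proof of Proposition \ref{linear} (and again when it introduces $W_{4LS}$); the one-derivative gain in $x_1$ comes, exactly as you say, from the growing second derivative $1+3\xi_1^2\sim\langle\xi_1\rangle^2$ of the one-dimensional phase, which is the mechanism in \cite{BKS} and in the one-dimensional oscillatory-integral estimates of \cite{KPV}; Stein interpolation against the unitary $L^2$ bound, then $TT^*$ with Hardy--Littlewood--Sobolev and Christ--Kiselev for the retarded operator, is the standard route to (ii). Your exponent bookkeeping checks out: along admissible pairs $1-\tfrac2r=\tfrac2q$ and $2(\tfrac12-\tfrac1r)=\tfrac2q$, the HLS balance $1+\tfrac1q=\tfrac2q+\tfrac1{q'}$ is automatic, and $r_j<\infty$ forces $q_j>2$, so $q_2'<2<q_1$ keeps Christ--Kiselev strictly away from its borderline. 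Compared with the paper, your version buys two things the citation leaves implicit: the theorems of \cite{KPV} concern one-dimensional phases, so the tensorization step is genuinely needed to obtain the two-dimensional anisotropic statement (weight confined to $x_1$, full decay $t^{-1}$ at the endpoint), and the retarded Duhamel operator over $(t,\infty)$ with mixed pairs $(q_1,r_1)\neq(q_2,r_2)$ does require the Christ--Kiselev upgrade you supply; what the citation buys is brevity.

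Two technical points to tighten if you write this out in full. First, in the dyadic analysis the per-shell van der Corput bound $N\cdot(tN^2)^{-1/2}=t^{-1/2}$ is not summable by itself, so away from the stationary shell you must take the minimum with the integration-by-parts bound (roughly $(tN^2)^{-1}$ per shell after one integration by parts, since the amplitude $\langle\xi_1\rangle$ and $|\phi'|\gtrsim N^3$ there), treating the low frequencies $|\xi_1|\lesssim t^{-1/4}$ by the trivial size bound; this is exactly the point you flag, and it does close uniformly in $z_1$ and in $t>0$. Second, Stein interpolation should be run on the analytic family $\langle\pt_{x_1}\rangle^{z}W(t)$, which requires admissible growth on the boundary lines; this holds because $\langle\xi_1\rangle^{iy}$ is unimodular on the line $\Re z=0$ and its $\xi_1$-derivatives on the line $\Re z=1$ cost only polynomial factors of $\langle y\rangle$ in the stationary-phase bounds.
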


\begin{proof}[Proof of Lemma \ref{S}.] 
See 
\cite[Theorem 3.1, Theorem 3.2]{KPV} for instance. 
\end{proof}



\section{Proof of Theorem \ref{nonlinear1}.} \label{sec:nonlinear}

In this section we prove Theorem \ref{nonlinear1}. To this end, 
we show the following lemma for the asymptotic profile.

\begin{lemma}\label{nl1} 
Let $S_{+}$ be given by (\ref{u}). Then 
we have for $t\ge 3$,
\begin{eqnarray*}
\|\hat{\psi}_{+}e^{iS_{+}(t,\xi)}\|_{H_{\xi}^{2}}
&\le& (\log t)^{2}P(\|\psi_{+}\|_{H^{0,2}}),\\
\|\frac{1}
{\sqrt{3\xi_{1}^{2}+1}}
|\hat{\psi}_{+}|\hat{\psi}_{+}e^{iS_{+}(t,\xi)}\|_{H_{\xi}^{2}}
&\le& (\log t)^{2}P(\|\psi_{+}\|_{H^{0,2}}),
\end{eqnarray*}
where $P(\|\psi_{+}\|_{H^{0,2}})$ is a polynomial in $\|\psi_{+}\|_{H^{0,2}}$ 
without constant term. 
\end{lemma}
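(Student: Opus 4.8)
The plan is to transfer everything to the Fourier side and estimate two products directly. Since $\|\psi_+\|_{H^{0,2}}=\|\langle x\rangle^2\psi_+\|_{L^2}$ is comparable, via Plancherel and $\langle x\rangle^2\leftrightarrow(1-\Delta_\xi)$, to $\|\hat{\psi}_+\|_{H^2_\xi}$, I may set $f:=\hat{\psi}_+\in H^2_\xi(\R^2)$ and phrase all bounds in terms of $\|f\|_{H^2_\xi}$. Writing $a(\xi)=(3\xi_1^2+1)^{-1/2}$, which is smooth with all $\xi$-derivatives bounded, we have $S_+=-\lambda(\log t)\,a\abs{f}$, and the two quantities to be estimated are of the form (good multiplier)$\times f\,e^{iS_+}$ and (good multiplier)$\times\abs{f}f\,e^{iS_+}$. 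By the Leibniz rule the bounded factor $a$ and its derivatives can be pulled out at the cost of harmless constants, so it suffices to bound $\|f\,e^{iS_+}\|_{H^2_\xi}$ and $\|\abs{f}f\,e^{iS_+}\|_{H^2_\xi}$. Throughout I will use the two-dimensional embeddings $H^2_\xi\hookrightarrow L^\infty$ and $H^2_\xi\hookrightarrow W^{1,4}_\xi$ together with the algebra property of $H^2$.

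Next I locate the source of the factor $(\log t)^2$. With $g=a\abs{f}$ one has $\pt(e^{iS_+})=i(\pt S_+)e^{iS_+}$ and $\pt^2(e^{iS_+})=\big(i\pt^2 S_+-(\pt S_+)^2\big)e^{iS_+}$, where $\pt S_+=-\lambda(\log t)\pt g$ and $\pt^2 S_+=-\lambda(\log t)\pt^2 g$. Hence each $\xi$-differentiation of the phase produces one power of $\log t$, and the only contribution carrying two powers is $(\pt S_+)^2=\lambda^2(\log t)^2(\pt g)^2$. Since $t\ge 3$ gives $\log t\ge 1$, every lower-order term (including the order-zero term $\|f\,e^{iS_+}\|_{L^2}=\|f\|_{L^2}$) is dominated by $(\log t)^2$ times a power of $\|f\|_{H^2_\xi}$; this also guarantees that the resulting polynomial $P$ carries no constant term.

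The genuine difficulty, which I expect to be the main obstacle, is the lack of smoothness of $\abs{f}$ on the zero set of $f$. For first derivatives there is no problem, since $\abs{\pt\abs{f}}\le\abs{\pt f}$ pointwise, whence $\abs{\pt g}\le C(\abs{\pt f}+\abs{f})$. The second derivative $\pt^2\abs{f}$ does contain a $1/\abs{f}$ singularity, but in the Leibniz expansion it \emph{always} appears multiplied by the factor $f$ (respectively $\abs{f}$) coming from the multiplicand, and the singular part then cancels, giving the pointwise bound $\abs{f\,\pt^2\abs{f}}\le C(\abs{f}\abs{\pt^2 f}+\abs{\pt f}^2)$. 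I would make this rigorous by the regularization $\rho_\varepsilon:=\sqrt{\abs{f}^2+\varepsilon^2}$, for which $\pt\rho_\varepsilon=\rho_\varepsilon^{-1}\Re(\bar f\pt f)$ and $\pt^2\rho_\varepsilon=\rho_\varepsilon^{-1}\big(\Re(\bar f\pt^2 f)+\abs{\pt f}^2\big)-\rho_\varepsilon^{-3}\big(\Re(\bar f\pt f)\big)^2$; using $\abs{f}/\rho_\varepsilon\le 1$ and $\abs{\Re(\bar f\pt f)}\le\abs{f}\abs{\pt f}$ yields $\abs{f\,\pt^2\rho_\varepsilon}\le C(\abs{f}\abs{\pt^2 f}+\abs{\pt f}^2)$ uniformly in $\varepsilon$, after which I pass to the limit $\varepsilon\to0$ by dominated convergence.

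Finally I assemble the $L^2$ estimates of the second derivative through Leibniz. The two dangerous terms in $\pt^2(f\,e^{iS_+})$ are $f(\pt S_+)^2 e^{iS_+}$ and $f(\pt^2 S_+)e^{iS_+}$. For the first, $\|f(\pt g)^2\|_{L^2}\le\|f\|_{L^\infty}\|\pt g\|_{L^4}^2\le C\|f\|_{H^2_\xi}^3$ by $H^2_\xi\hookrightarrow L^\infty$ and $H^2_\xi\hookrightarrow W^{1,4}_\xi$; for the second, expanding $\pt^2 g=a\pt^2\abs{f}+2(\pt a)\pt\abs{f}+(\pt^2 a)\abs{f}$ and invoking the cancellation above gives $\|f\,\pt^2 g\|_{L^2}\le C\|f\|_{H^2_\xi}^2$. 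The mixed and lower-order terms are estimated the same way and carry at most one power of $\log t$. Collecting everything yields $\|f\,e^{iS_+}\|_{H^2_\xi}\le(\log t)^2 P(\|f\|_{H^2_\xi})$. The second estimate is identical after replacing $f$ by $\abs{f}f$: here $\abs{\pt(\abs{f}f)}\le 2\abs{f}\abs{\pt f}$ and $\|\abs{f}f\|_{L^\infty}=\|f\|_{L^\infty}^2$ keep all products within $H^2_\xi$, and the same cancellation controls the term $\abs{f}f\,\pt^2\abs{f}$. This proves the lemma.
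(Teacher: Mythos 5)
The paper offers no proof to compare against: it disposes of this lemma with the single line ``Since the proof follows from a direct calculations, we omit the detail.'' Your argument is a correct and complete version of exactly that direct calculation, so in substance you have reconstructed (and in fact improved on) what the authors intended. The log-counting is right: each derivative falling on $e^{iS_+}$ costs one factor of $\log t$, the worst term is $(\partial S_+)^2$ with $(\log t)^2$, and since $\log t\ge 1$ for $t\ge 3$ all lower-order contributions (including the zeroth-order $\|f\|_{L^2}$) are absorbed, leaving a polynomial $P$ with no constant term; the two-dimensional embeddings $H^2\hookrightarrow L^\infty$, $H^2\hookrightarrow W^{1,4}$ close every product estimate, with top terms $\|f\|_{L^\infty}\|\partial g\|_{L^4}^2\lesssim\|f\|_{H^2}^3$ and $\|f\|_{L^\infty}^2\|\partial g\|_{L^4}^2\lesssim\|f\|_{H^2}^4$. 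The genuinely non-routine point, which the paper's ``direct calculations'' glosses over, is that here the phase involves $|\hat\psi_+|$ to the first power (unlike the cubic case of Ozawa and Hayashi--Naumkin, where $|\hat\psi_+|^2$ is smooth in the amplitude), so $\partial^2|f|$ is singular on $\{f=0\}$; you correctly identified that the singular part always appears multiplied by $f$ (or $|f|f$) and cancels, and your regularization $\rho_\varepsilon=\sqrt{|f|^2+\varepsilon^2}$ with the uniform bound $|f\,\partial^2\rho_\varepsilon|\le C(|f||\partial^2 f|+|\partial f|^2)$ is the right way to make this rigorous. One small point of polish: ``pass to the limit by dominated convergence'' is slightly loose for concluding membership of $fe^{iS_+}$ in $H^2$; cleaner is to note that $fe^{iS_{+,\varepsilon}}\to fe^{iS_+}$ in $L^2$ while the $H^2$ norms are bounded uniformly in $\varepsilon$, whence weak convergence in $H^2$ and lower semicontinuity of the norm give the stated bound. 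With that phrasing adjusted, the proof is complete.
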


\begin{proof}[Proof of Lemma \ref{nl1}.] 
Since the proof follows from a direct calculations, we 
omit the detail. \end{proof}

Let us start the proof of Theorem \ref{nonlinear1}. 
We first rewrite (\ref{FSP2}) as the integral equation. 
Let ${{\mathcal L}}=i\pt_t+(1/2)\Delta-(1/4)\pt_{x_{1}}^4$ 
and let
\begin{eqnarray}
w(t,\xi)=\hat{\psi}_+(\xi)e^{iS_+(t,\xi)}, \label{721}
\end{eqnarray}
where $S_+$ is given by (\ref{u}). 
From (\ref{FSP2}) and (\ref{721}), we obtain
\begin{eqnarray}
i\pt_t({{{{\mathcal F}}}}W(-t)u)&=&
{{{{\mathcal F}}}}W(-t){{{\mathcal L}}}u=\lambda
{{{{\mathcal F}}}}W(-t)|u|u,\label{741}\\
i\pt_tw&=&
\lambda
\frac{t^{-1}}
{\sqrt{3\xi_{1}^{2}+1}}|\hat{\psi}_{+}(\xi)|
\hat{\psi}_+(\xi)e^{iS_{+}(t,\xi)}.\label{751}
\end{eqnarray}
Subtracting (\ref{751}) from (\ref{741}), we have
\begin{eqnarray}
\lefteqn{i\pt_t({{{{\mathcal F}}}}W(-t)u-w)}\nonumber\\
&=&\lambda{{{{\mathcal F}}}}W(-t)
\left[|u|u
-W(t){{{\mathcal F}}}^{-1}
\left[\frac{t^{-1}}
{\sqrt{3\xi_{1}^{2}+1}}|\hat{\psi}_{+}(\xi)|
\hat{\psi}_+(\xi)e^{iS_{+}(t,\xi)}\right]\right].
\nonumber\\
\label{761}
\end{eqnarray} 
Proposition \ref{linear} and Lemma \ref{nl1} yield
\begin{eqnarray*}
W(t){{{\mathcal F}}}^{-1}
\left[
\frac{t^{-1}}
{\sqrt{3\xi_{1}^{2}+1}}|\hat{\psi}_{+}(\xi)|
\hat{\psi}_+(\xi)e^{iS_{+}(t,\xi)}
\right]
=|u_{+}|u_{+}+R_{1}(t),
\end{eqnarray*}
where $u_{+}$ is given by (\ref{u}) and $R_{1}$ satisfies 
\begin{eqnarray}
\|R_{1}(t)\|_{L_{x}^{2}}
&\le& Ct^{-1-\beta}
\|{{\mathcal F}}^{-1}
[\frac{1}
{\sqrt{3\xi_{1}^{2}+1}}|\hat{\psi}_{+}(\xi)|
\hat{\psi}_+(\xi)e^{iS_{+}(t,\xi)}]\|_{H_{x}^{0,2}}
\nonumber\\
&=& Ct^{-1-\beta}
\|\frac{1}
{\sqrt{3\xi_{1}^{2}+1}}
|\hat{\psi}_{+}|\hat{\psi}_{+}e^{iS_{+}(t,\xi)}\|_{H_{\xi}^{2}}
\nonumber\\
&\le& Ct^{-1-\beta}(\log t)^{2}P(\|\psi_{+}\|_{H_{x}^{0,2}}),
\label{801}
\end{eqnarray}
where $0<\beta<3/4$. 
Furthermore, by Proposition \ref{linear} and Lemma \ref{nl1}, 
\begin{eqnarray}
\lefteqn{
W(t){{{\mathcal F}}}^{-1}
\left[\frac{t^{-1}}
{\sqrt{3\xi_{1}^{2}+1}}|\hat{\psi}_{+}(\xi)|
\hat{\psi}_+(\xi)e^{iS_{+}(t,\xi)}\right]
}\qquad\qquad\qquad{}
\nonumber\\
&=&
|W(t){{{{\mathcal F}}}}^{-1}w|W(t){{{{\mathcal F}}}}^{-1}w+R_{1}(t)+R_{2}(t),
\label{800}
\end{eqnarray}
where 
\begin{eqnarray}
\|R_{2}(t)\|_{L_{x}^{2}}
&=&
\||W(t){{{{\mathcal F}}}}^{-1}w|W(t){{{{\mathcal F}}}}^{-1}w
-|u_{+}|u_{+}\|_{L_{x}^{2}}\nonumber\\
&\le&(\|W(t){{{{\mathcal F}}}}^{-1}w\|_{L_{x}^{\infty}}
+\|u_{+}\|_{L_{x}^{\infty}})
\|W(t){{{{\mathcal F}}}}^{-1}w-u_{+}\|_{L_{x}^{2}}\nonumber\\
&\le&Ct^{-1-\beta}(\log t)^{4}P(\|\psi_{+}\|_{H_{x}^{0,2}}).
\label{802}
\end{eqnarray}
Substituting (\ref{800}) into (\ref{761}), we obtain 
\begin{eqnarray*}
\lefteqn{i\pt_t({{{{\mathcal F}}}}W(-t)u-w)}\\
&=&\lambda{{{{\mathcal F}}}}W(-t)[|u|u-|W(t){{{{\mathcal F}}}}^{-1}w|
W(t){{{{\mathcal F}}}}^{-1}w]
-\lambda{{{{\mathcal F}}}}W(-t)(R_{1}+R_{2}).
\end{eqnarray*}
Integrating the above equation with respect to $t$ variable on $(t,\infty)$,
we have
\begin{eqnarray}
\lefteqn{u(t)-
W(t){{{\mathcal F}}}^{-1}w}\nonumber\\
&=&i\lambda\int_t^{+\infty}
W(t-\tau)[|u|u-|W(t){{{{\mathcal F}}}}^{-1}w|
W(t){{{{\mathcal F}}}}^{-1}w](\tau)d\tau\nonumber\\
& &-i\lambda\int_t^{+\infty}W(t-\tau)(R_{1}+R_{2})(\tau)d\tau.\label{811}
\end{eqnarray}
To show the existence of $u$ satisfying (\ref{811}), we 
shall prove that if $\|\psi_{+}\|_{H^{0,2}}$ is sufficiently small, 
then the map $\Phi$ given by 
\begin{eqnarray*}
\Phi[u](t)&=&i\lambda\int_t^{+\infty}
W(t-\tau)[|u|u-|W(t){{{{\mathcal F}}}}^{-1}w|
W(t){{{{\mathcal F}}}}^{-1}w](\tau)d\tau\nonumber\\
& &-i\lambda\int_t^{+\infty}W(t-\tau)(R_{1}+R_{2})(\tau)d\tau
\end{eqnarray*}
is a contraction on
\begin{eqnarray*}
{{{\bf X}}}_{\rho,T}&=&
\{u\in C([T,\infty);L^2(\rre^{2}))\cap \langle\pt_{x_{1}}
\rangle^{-\frac{1}{4}}L_{loc}^{4}(T,\infty;L^{4}(\rre^{2}));\\
& &\qquad\qquad\qquad\qquad\qquad\qquad\qquad\qquad
\|u-W(t){{\mathcal F}}^{-1}w\|_{{{\bf X}}_T}\le \rho\},\\
\|v\|_{{{\bf X}}_T}
&=&\sup_{t\ge T}
t^{\alpha}(\|v\|_{L^{\infty}(t,\infty;L_x^2)}
+\|\langle\pt_{x_{1}}\rangle^{\frac{1}{4}}v\|_{L^{4}(t,\infty;L_x^{4})})
\end{eqnarray*}
for some $T\ge3$ and $\rho>0$. 

Let $v(t)=u(t)-W(t){{\mathcal F}}^{-1}w$ and $v\in{{\bf X}}_{\rho,T}$. 
Then 
the Strichartz estimate (Lemma \ref{S}) implies
\begin{eqnarray}
\lefteqn{\|\Phi[u]-W(t){{\mathcal F}}^{-1}w
\|_{L^{\infty}(t,\infty;L_x^2)}
+\|\langle\pt_{x_{1}}\rangle^{\frac{1}{4}}
(\Phi[u]-W(t){{\mathcal F}}^{-1}w)\|_{L^4(t,\infty;L_x^{4})}}
\qquad\qquad\nonumber\\
&\le&C(\||v|v\|_{L^{\frac43}(t,\infty;L_x^{\frac43})}
+\||W(t){{\mathcal F}}^{-1}w|v\|_{L^1(t,\infty;L_x^2)})\nonumber\\
& &
+\|R_{1}\|_{L^1(t,\infty;L_x^2)})+\|R_{1}\|_{L^1(t,\infty;L_x^2)}).
\label{821}
\end{eqnarray}
By the H\"{o}lder inequality, 
\begin{eqnarray*}
\||v|v\|_{L^{\frac43}(t,\infty;L_x^{\frac43})}
&\le&C\|\|v\|_{L_x^{2}}\|v\|_{L_x^{4}}\|_{L^{\frac43}(t,\infty)}\\
&\le&C\rho\|t^{-\alpha}
\|v\|_{L_x^{4}}\|_{L^{\frac43}(t,\infty)}\\
&\le& C\rho\|t^{-\alpha}\|_{L^{2}(t,\infty)}
\|v\|_{L^{4}(t,\infty;L^{4})}\\
&\le& C\rho^2 t^{-2\alpha+\frac{1}{2}},\nonumber\\
\||W(t){{\mathcal F}}^{-1}w|v\|_{L^1(t,\infty;L_x^2)}
&\le&\|\|W(t){{\mathcal F}}^{-1}w\|_{L_x^{\infty}}\|v\|_{L_x^2}\|_{L^1(t,\infty)}\\
&\le& C\rho P(\|\psi_{+}\|_{H_{x}^{0,2}})\|t^{-1-\alpha}\|_{L^1(t,\infty)}\\
&\le&C\rho P(\|\psi_{+}\|_{H_{x}^{0,2}})t^{-\alpha}.
\end{eqnarray*} 
Substituting the above two inequalities, 
(\ref{801}), and (\ref{802}) 
into (\ref{821}), we have
\begin{eqnarray*}
\|\Phi[u]-W(t){{\mathcal F}}^{-1}w\|_{{{\bf X}}_T}
\le C(\rho^{2}T^{-\alpha+\frac{1}{2}}+\rho P(\|\psi_{+}\|_{H_{x}^{0,2}})
+T^{\alpha-\beta}(\log T)^{4}).
\end{eqnarray*}
Choosing $1/2<\alpha<\beta<3/4$, 
$T$ large enough, and $\|\psi_{+}\|_{H_{x}^{0,2}}$ sufficiently small, 
we find that $\Phi$ is a map onto ${{\bf X}}_{\rho,T}$. 
In a similar way we can conclude that $\Phi$ is a contraction 
map on ${{\bf X}}_{\rho,T}$. Therefore, by the Banach fixed point theorem 
we find that $\Phi$ has a unique fixed point in ${{\bf X}}_{\rho,T}$ 
which is the solution to the final state problem (\ref{FSP2}). 

From (\ref{FSP2}), we obtain
\begin{eqnarray}
u(t)=W(t-T)u(T)-i\lambda\int_T^tW(\tau)|u|u(\tau)d\tau.
\label{5.19}
\end{eqnarray}
Since $u(T)\in L_x^{2}(\rre^{3})$, 
combining the argument by \cite{Tsutsumi} 
with the Strichartz estimate (Lemma \ref{S}) 
and $L^2$ conservation law for (\ref{5.19}), 
we can prove that (\ref{5.19}) has a unique 
global solution in $C(\rre;L_x^2(\rre^{2}))\cap
\langle\pt_{x_{1}}\rangle^{-1/4}L_{loc}^4(\rre;L_x^{4}(\rre^{2}))$. 
Therefore the solution $u$ of (\ref{FSP2}) can be extended to all times. 

Finally we show that the solution to (\ref{FSP2}) converges to 
$u_{+}$ in $L^{2}$ as $t\to\infty$. 
Since $u-W(t){{{{\mathcal F}}}}^{-1}w\in{{\bf X}}_{\rho,T}$, 
Proposition \ref{linear} and Lemma \ref{nl1} yield
\begin{eqnarray*}
\lefteqn{\|u(t)-u_{+}(t)\|_{L_{x}^{2}}}\\
&\le&
\|u(t)-W(t){{{{\mathcal F}}}}^{-1}w\|_{L_{x}^{2}}
+\|W(t){{{{\mathcal F}}}}^{-1}w-u_{+}\|_{L_{x}^{2}}\\
&\le&Ct^{-\alpha}+Ct^{-\beta}(\log t)^{2}\\
&\le&Ct^{-\alpha},
\end{eqnarray*}
where $1/2<\alpha<\beta<3/4$. 
This completes the proof of Theorem \ref{nonlinear1}. $\qed$


\section{Proof of Theorem \ref{nonlinear3}.} \label{sec:nonlinear}

In this section we prove Theorem \ref{nonlinear3} via the argument by 
Glassey \cite{G}. To prove Theorem \ref{nonlinear3}, we employ the 
asymptotic formula \cite[Proposition 2.1]{SS} as $t\to\infty$ 
for the solution to 
\begin{eqnarray}
\left\{
\begin{array}{l}
\displaystyle{
i\pt_tu+\frac12\Delta u-\frac{1}{4}\pt_{x_{1}}^4u=0,
\qquad t>0,x\in\rre^{d},
}\\
\displaystyle{u(0,x)=\psi(x),\qquad x\in\rre^{d}.}
\end{array}
\right.
\label{4LS2}
\end{eqnarray}

\begin{lemma}\label{linear2} 
Let $W(t)\psi$ be a solution to (\ref{4LS2}). Then we have
\begin{eqnarray*}
[W(t)\psi](x)=
\frac{t^{-\frac{d}{2}}}{\sqrt{3\mu_{1}^{2}+1}}\hat{\psi}(\mu)
e^{\frac34it\mu_{1}^{4}+\frac12it|\mu|^{2}
-i\frac{d}{4}\pi}+R(t,x)
\end{eqnarray*}
for $t\ge2$,
where $\mu=(\mu_{1},\mu_{\perp})$ is given by 
\begin{equation*}
\begin{aligned}
\mu_{1}&=
\left\{\frac1{2t}\left(x_{1}+\sqrt{x_{1}^{2}+\frac{4}{27}t^{2}}\right)
\right\}^{1/3}
+
\left\{\frac1{2t}\left(x_{1}-\sqrt{x_{1}^{2}+\frac{4}{27}t^{2}}\right)
\right\}^{1/3},\\
\mu_{\perp}&=\frac{x_{\perp}}{t}.
\end{aligned}
\end{equation*}
and $R$ satisfies
\begin{eqnarray*}
\|
R(t)\|_{L_x^{p}}
\le Ct^{-d(\frac12-\frac1p)-\beta}\|\psi\|_{H_{x}^{0,s}},
\end{eqnarray*}
for $2\le p\le\infty$, $1/(4p)<\beta<1/2$ and $s>d/2-(d-1)/p+1$.
\end{lemma}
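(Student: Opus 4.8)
The plan is to follow the proof of Proposition \ref{linear} line by line in the $\xi_1$ variable, after first integrating out the $d-1$ transverse frequencies, and then to obtain the $L^p$ bound --- the genuinely new ingredient compared with Proposition \ref{linear} --- by combining the exact change of variables $x\mapsto\mu$ with the one-dimensional dispersive estimate of Lemma \ref{S}(i) in place of the Plancherel identity. Writing $\xi=(\xi_1,\xi_\perp)$ and $x=(x_1,x_\perp)$ with $\xi_\perp,x_\perp\in\rre^{d-1}$, the phase in (\ref{os}) separates as $(x_\perp\cdot\xi_\perp-\tfrac12 t|\xi_\perp|^2)+(x_1\xi_1-\tfrac12 t\xi_1^2-\tfrac14 t\xi_1^4)$, so the Fresnel formula applied in each transverse direction extracts a factor $t^{-(d-1)/2}e^{\frac i2 t|\mu_\perp|^2-i(d-1)\pi/4}$, with $\mu_\perp=x_\perp/t$, and reduces $[W(t)\psi](x)$ to a constant multiple of
\begin{eqnarray*}
t^{-\frac{d-1}{2}}e^{\frac i2 t|\mu_\perp|^2-i\frac{d-1}{4}\pi}
\int_{\rre}e^{ix_1\xi_1-\frac i2 t\xi_1^2-\frac i4 t\xi_1^4}
{{\mathcal F}}[e^{i|y_\perp|^2/(2t)}\psi](\xi_1,\mu_\perp)\,d\xi_1 ,
\end{eqnarray*}
which is precisely the integral analysed in Proposition \ref{linear}, now with $\mu_\perp$ replacing $\mu_2$.

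First I would split the amplitude as ${{\mathcal F}}[\psi](\xi_1,\mu_\perp)+{{\mathcal F}}[(e^{i|y_\perp|^2/(2t)}-1)\psi](\xi_1,\mu_\perp)$ and treat the two pieces separately. For the first piece I would run the stationary-phase computation of Proposition \ref{linear} verbatim --- the substitution defining $\eta_1$, the integration-by-parts identities (\ref{ip1})--(\ref{ip2}) with the symbols $G$ and $H$, and the bounds (\ref{seo})--(\ref{se}); the longitudinal stationary point $\mu_1$ (which solves $\mu_1^3+\mu_1=x_1/t$, whence (\ref{mu})) contributes the remaining $t^{-1/2}$ and $e^{-i\pi/4}$, producing the leading term
\begin{eqnarray*}
\frac{t^{-d/2}}{\sqrt{3\mu_1^2+1}}\hat\psi(\mu)
e^{\frac34 it\mu_1^4+\frac12 it|\mu|^2-i\frac d4\pi}
\end{eqnarray*}
together with pointwise remainders of the shape $t^{-(d-1)/2-1-\gamma}\langle\mu_1\rangle^{-\cdots}$ times $|\pt_{\xi_1}\hat\psi(\mu)|$ or $\|\pt_{\xi_1}^2{{\mathcal F}}[\psi](\cdot,\mu_\perp)\|_{L^2_{\xi_1}}$. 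To turn these into $L^p_x$ bounds I would use the exact change of variables $x\mapsto\mu$, whose Jacobian is $dx=t^d(3\mu_1^2+1)\,d\mu$, so that each $L^p_x$ norm becomes $t^{-d(1/2-1/p)}$ times a weighted $L^p_\mu$ norm; the remaining transverse amplitudes are then controlled by $\|\psi\|_{H^{0,s}}$ through Hausdorff--Young and Sobolev embedding in $\mu_\perp$ together with one-dimensional interpolation in $\xi_1$.

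The main obstacle is the second piece, the transverse phase defect, since here Plancherel --- which closed the $p=2$ case in Proposition \ref{linear} --- is unavailable. This term equals $t^{-(d-1)/2}$ times the one-dimensional quartic evolution in $x_1$ of ${{\mathcal F}}_{x_\perp\mapsto\mu_\perp}[(e^{i|y_\perp|^2/(2t)}-1)\psi]$, so I would bound its $L^p_{x_1}$ norm by the one-dimensional decay estimate (the case $d=1$ of Lemma \ref{S}(i)), giving a factor $t^{-(1/2-1/p)}$, and then change variables in $x_\perp$; the transverse factor $t^{-(d-1)/2}$, the dispersive factor $t^{-(1/2-1/p)}$, and the Jacobian $t^{(d-1)/p}$ combine to the announced $t^{-d(1/2-1/p)}$. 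The extra decay $t^{-\beta}$ is supplied by the elementary bound $|e^{i|y_\perp|^2/(2t)}-1|\le C(|y_\perp|^2/t)^{\beta}$, at the cost of a weight $|y_\perp|^{2\beta}$, and the resulting mixed $L^{p'}_{x_1}L^p_{\mu_\perp}$ norm is estimated by $\|\psi\|_{H^{0,s}}$ via Hausdorff--Young and Hölder. The delicate point --- and what ultimately fixes the admissible range $1/(4p)<\beta<1/2$ together with the weight threshold $s>d/2-(d-1)/p+1$ --- is to balance this gain in decay against the regularity and weight actually available from $\|\psi\|_{H^{0,s}}$; everything else reduces to the one-dimensional analysis already carried out in Proposition \ref{linear}.
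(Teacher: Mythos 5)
Your first point of reference should be that the paper itself does not prove Lemma \ref{linear2} at all: its entire ``proof'' is the citation to \cite[Proposition 2.1]{SS}. So your reconstruction can only be measured against the method of Proposition \ref{linear} and against internal consistency, and on that score the architecture is right and much of the verifiable bookkeeping checks out: the transverse Fresnel factorization producing $t^{-(d-1)/2}e^{\frac{i}{2}t|\mu_\perp|^2-i\frac{d-1}{4}\pi}$ and the amplitude $\mathcal{F}[e^{i|y_\perp|^2/(2t)}\psi](\xi_1,\mu_\perp)$; the identification of the stationary point via $\mu_1^3+\mu_1=x_1/t$ (matching the Cardano formula in the statement); the Jacobian $dx=t^{d}(3\mu_1^2+1)\,d\mu$; and the defect-term scheme, where your use of the one-dimensional decay estimate for $W_{4LS}$ is legitimate (the factor $\langle\pt_{x_1}\rangle^{1-2/p}$ in Lemma \ref{S}(i) can be dropped, a negative-order Bessel potential being bounded on $L^{p'}$), the exponents $t^{-(d-1)/2}\cdot t^{-(\frac12-\frac1p)}\cdot t^{\frac{d-1}{p}}=t^{-d(\frac12-\frac1p)}$ combine as you say, and the weight count $2\beta+d(\frac12-\frac1p)$ for $|e^{i|y_\perp|^2/(2t)}-1|\le C(|y_\perp|^2/t)^{\beta}$ is compatible with $\beta<1/2$ and $s>d/2-(d-1)/p+1$.

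The genuine gap is in the main stationary-phase piece, precisely where you write ``verbatim'' plus unspecified ``interpolation'' and ``balancing''. The threshold decomposes as $s>d/2-(d-1)/p+1=(d-1)(\tfrac12-\tfrac1p)+\tfrac32$: after paying the transverse Hausdorff--Young/H\"older cost $(d-1)(\frac12-\frac1p)$ to pass to $L^p_{\mu_\perp}$ on the slice, only just over $3/2$ derivatives in the $\xi_1$ variable remain available. But the argument of Proposition \ref{linear} cannot be transplanted under that hypothesis: its bounds on $L_2$ and $L_3$ consume two full derivatives ($\pt_{\xi_1}^2\mathcal{F}[\psi]\in L^2_{\xi_1}$, i.e.\ $s\ge2$, used in the Taylor remainder $|\xi_1-\mu_1|^{3/2}$ estimate), and it closes only at $p=2$ via Plancherel. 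For $2< p<\infty$ and $s$ near the stated threshold one must instead run a fractional Taylor expansion at $\mu_1$ with H\"older remainder of order tied to the actually available $\xi_1$-regularity, redo the stationary-phase integrations by parts to see which $\gamma$ survives --- this is where the weaker range $\beta<1/2$ (versus $3/4$ in Proposition \ref{linear} with $s=2$) must come from --- and control mixed slice norms such as $\|\pt_{\xi_1}\hat\psi(\mu_1,\mu_\perp)\|_{L^p_{\mu}}$ from $\|\psi\|_{H^{0,s}}$. You correctly flag this as the delicate point, but you assert rather than prove that the balance closes at the stated thresholds; that assertion is exactly the content of \cite[Proposition 2.1]{SS}, so the decisive quantitative step of the lemma is missing from the proposal. (The lower bound $\beta>1/(4p)$, by contrast, is not binding: for $t\ge2$ a smaller $\beta$ only weakens the conclusion, so your proof need not ``produce'' it.)
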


\begin{proof}[Proof of Lemma \ref{linear2}] 
See \cite[Proposition 2.1]{SS}. 
\end{proof}

\begin{proof}[Proof of Theorem \ref{nonlinear3}] 
Let $0<s<t$ and let $u$ be a solution to (\ref{4NLS}) satisfying (\ref{asym}). 
We denote $\langle u,v\rangle=\int u(x)\overline{v(x)}dx$. Then a direct calculation shows
\begin{eqnarray}
\lefteqn{\langle W(-t)u(t)-W(-s)u(s),\psi_{+}\rangle_{L_{x}^{2}}}
\nonumber\\
&=&
-i\lambda\int_{s}^{t}
\langle W(-\tau)|u|^{p-1}u(\tau),\psi_{+}\rangle_{L_{x}^{2}}d\tau
\nonumber\\
&=&
-i\lambda\int_{s}^{t}
\langle (|u|^{p-1}u)(\tau),W(\tau)\psi_{+}\rangle_{L_{x}^{2}}d\tau
\nonumber\\
&=&
-i\lambda\int_{s}^{t}
\langle (|u_{+}^{0}|^{p-1}u_{+}^{0})(\tau),u_{+}^{0}(\tau)\rangle_{L_{x}^{2}}d\tau
\nonumber\\
& &
-i\lambda\int_{s}^{t}
\langle (|u|^{p-1}u)(\tau),W(\tau)\psi_{+}-u_{+}^{0}(\tau)\rangle_{L_{x}^{2}}d\tau
\nonumber\\
& &
-i\lambda\int_{s}^{t}
\langle (|u|^{p-1}u)(\tau)-(|u_{+}^{0}|^{p-1}u_{+}^{0})(\tau),
u_{+}^{0}(\tau)\rangle_{L_{x}^{2}}d\tau
\nonumber\\
&=:&I_{1}(t,s)+I_{2}(t,s)+I_{3}(t,s),
\label{n1}
\end{eqnarray}
where $u_{+}^{0}$ is defined by 
\begin{eqnarray*}
u_{+}^{0}(t,x)=
\frac{t^{-\frac{d}{2}}}{\sqrt{3\mu_{1}^{2}+1}}\hat{\psi}_{+}(\mu)
e^{\frac34it\mu_{1}^{4}+\frac12it|\mu|^{2}
-i\frac{d}{4}\pi}.
\end{eqnarray*}
By the definition of $u_{+}^{0}$, we easily see 
\begin{eqnarray}
I_{1}(t,s)=-i\lambda
\left(\int_{\rre^{d}}
\frac{|\hat{\psi}_{+}(\mu)|^{p+1}}{(1+3\mu_{1}^{2})^{\frac{p-1}{2}}}d\mu\right)
\left(\int_{s}^{t}\tau^{-\frac{d}{2}(p-1)}d\tau\right).
\label{n2}
\end{eqnarray}
By Lemma \ref{linear2} and the conservation law for $L^{2}$ norm of $u$, 
we have
\begin{eqnarray}
|I_{2}(t,s)|
&\le&
C\int_{s}^{t}\|u(\tau)\|_{L_{x}^{2}}^{p}
\|W(\tau)\psi_{+}-u_{+}^{0}(\tau)\|_{L_{x}^{\frac{2}{2-p}}}d\tau
\nonumber\\
&\le&
C\|u_{0}\|_{L_{x}^{2}}^{p}\|\psi_{+}\|_{H_{x}^{0,s}}\int_{s}^{t}
\tau^{-\frac{d}{2}(p-1)-\beta}d\tau,
\label{n3}
\end{eqnarray}
where $(2-p)/8<\beta<1/2$ and $s>(4-d)/2+(d-1)p/2$.

By Lemma \ref{linear2} and the conservation law for $L^{2}$ norm of $u$, 
we have
\begin{eqnarray}
|I_{3}(t,s)|
&\le&
C\int_{s}^{t}(\|u(\tau)\|_{L_{x}^{2}}^{p-1}+\|u_{+}^{0}(\tau)\|_{L_{x}^{2}}^{p-1})
\|u(\tau)-u_{+}^{0}\|_{L_{x}^{2}}
\|u_{+}^{0}\|_{L_{x}^{\frac{2}{2-p}}}d\tau
\nonumber\\
&\le&
C(\|u_{0}\|_{L_{x}^{2}}^{p-1}+\|\psi_{+}\|_{L_{x}^{2}}^{p-1})
\|\langle\pt_{x_{1}}\rangle^{-(p-1)}\psi_{+}\|_{L^{\frac{p}{2}}}
\nonumber\\
& &\qquad\times\int_{s}^{t}
(\|u(\tau)-W(\tau)\psi_{+}\|_{L_{x}^{2}}+t^{-\beta}\|\psi_{+}\|_{H^{0,s}})
\tau^{-\frac{d}{2}(p-1)}d\tau,\nonumber\\
\label{n4}
\end{eqnarray}
where $1/8<\beta<1/2$ and $s>3/2$. 
By (\ref{n1}), (\ref{n2}), (\ref{n3}) and (\ref{n4}), we have
\begin{eqnarray*}
\lefteqn{|\langle W(-t)u(t)-W(-s)u(s),\psi_{+}\rangle_{L_{x}^{2}}|}
\\
&\ge&|\lambda|
\left(\int_{\rre^{d}}
\frac{|\hat{\psi}_{+}(\mu)|^{p+1}}{(1+3\mu_{1}^{2})^{\frac{p-1}{2}}}d\mu\right)
\left(\int_{s}^{t}\tau^{-\frac{d}{2}(p-1)}d\tau\right)\\
& &-C\int_{s}^{t}\tau^{-\frac{d}{2}(p-1)-\beta}d\tau
-C\int_{s}^{t}\|u(\tau)-W(\tau)\psi_{+}\|_{L_{x}^{2}}\tau^{-\frac{d}{2}(p-1)}d\tau.
\end{eqnarray*}
Hence by the assumption (\ref{asym}) on $u$, 
we see that there exists $T>0$ such that 
for $t>s>T$, 
\begin{eqnarray*}
\lefteqn{\left|\langle W(-t)u(t)-W(-s)u(s),\psi_{+}\rangle_{L_{x}^{2}}\right|}
\\
&\ge&\frac{|\lambda|}{2}\left(\int_{\rre^{d}}
\frac{|\hat{\psi}_{+}(\mu)|^{p+1}}{(1+3\mu_{1}^{2})^{\frac{p-1}{2}}}d\mu
\right)
\left(\int_{s}^{t}\tau^{-\frac{d}{2}(p-1)}d\tau\right).
\end{eqnarray*}
Hence we have $u\equiv 0$. This completes the proof. 
\end{proof}

\vskip3mm
\noindent {\bf Acknowledgments.} 
Part of this work was done while J.S was 
visiting the Department of Mathematics at 
Universit\'e de Paris-Sud, Orsay 
whose hospitality he gratefully acknowledges.
J.S. is partially supported by JSPS, 
Grant-in-Aid for Scientific Research (B) 17H02851. 
J.-C. S. is partly supported by the ANR project ANuI.

\end{document}